\title{Weak Rudin-Keisler reductions on projective ideals}
\author{Konstantinos A. Beros}
\date{}
\subjclass[2010]{%
03E15, 
03E60, 
03E05, 
28A05. 
}
\keywords{Projective ideals, weak Rudin-Keisler reductions, projective determinacy, co-analytic equivalence relations.}
\address{Department of Mathematics, University of North Texas, General Academics Building 435, 1155 Union Circle, \#311430, Denton, TX 76203-5017}
\email{beros@unt.edu}
\thanks{Thanks to my colleague and friend Anush Tserunyan for the discussion that led to the formulation of Theorem~\ref{T6}.}
\theoremstyle{plain}
\newtheorem{theorem}{Theorem}[section]
\newtheorem{lemma}[theorem]{Lemma}
\newtheorem{corollary}[theorem]{Corollary}
\newtheorem{proposition}[theorem]{Proposition}
\newtheorem*{theorem*}{Theorem}
\newtheorem*{corollary*}{Corollary}
\newtheorem*{proposition*}{Proposition}
\def\thmref#1{\label{#1}{\theoremstyle{plain}\newtheorem*{refthm#1}{Theorem~\ref{#1}}}}
\theoremstyle{definition}
\newtheorem{definition}[theorem]{Definition}
\newtheorem{example}[theorem]{Example}
\theoremstyle{remark}
\newtheorem*{remark*}{Remark}
\numberwithin{equation}{section}
\def\upto{\upharpoonright}
\def\dom{{\rm dom}}
\def\leqB{\leq_{\rm B}}
\def\leqRK{\leq_{\rm RK}}
\def\leqwRK{\leq_{\rm wRK}}
\def\leqK{\leq_{\rm K}}
\def\I{\mathcal I}
\def\J{\mathcal J}
\def\U{\mathcal U}
\def\V{\mathcal V}
\def\sdif{\mathop{\vartriangle}}
\def\Fin{\mathsf{Fin}}
\def\Inf{\mathsf{Inf}}
\def\Coinf{\mathsf{Coinf}}
\def\Cof{\mathsf{Cof}}
\begin{document}



\begin{abstract}
We consider a slightly modified form of the standard Rudin-Keisler order on ideals and demonstrate the existence of complete (with respect to this order) ideals in various projective classes.  Using our methods, we obtain a simple proof of Hjorth's theorem on the existence of a complete $\mathbf \Pi^1_1$ equivalence relation.  Our proof of Hjorth's theorem enables us (under PD) to generalize his result to the classes of $\mathbf \Pi^1_{2n+1}$ equivalence relations.
\end{abstract}

\maketitle



\section{Introduction}\label{S1}

An {\em ideal} on $\omega$ is a family, $\I$, of subsets of $\omega$ such that, for any $x,y \subseteq \omega$, one has $x,y \in \I \implies x \cup y \in \I$ and $y \subseteq x \in \I \implies y \in \I$.  For the present purposes, we identify the power set of the natural numbers with the Cantor space $2^\omega$ and consider definable ideals on $\omega$ as subsets of $2^\omega$.  

One defines the Rudin-Keisler order, $\leqRK$, for ideals, by letting $\I \leqRK \J$ (for ideals $\I$ and $\J$ on $\omega$) iff there exists a function $f: \omega \rightarrow \omega$ such that 
\[
x \in \I \iff f^{-1} [x] \in \J,
\]
for each $x \in 2^\omega$.  (Here and throughout this paper, we use the notation $f[x]$ (respectively, $f^{-1} [x]$) to denote the $f$-image (respectively, the $f$-preimage) of the set $x \subseteq \omega$.)  The map $f$ is called a {\em Rudin-Keisler reduction} of $\I$ to $\J$.  The Rudin-Keisler order was first described by Mary Ellen Rudin and has been studied extensively since its introduction.  Subsequently, several variants of the Rudin-Keisler order have been considered.  Many of these are discussed in the survey \cite{HRUSAK.combinatorics.filters} by Michael Hru\u{s}\'{a}k or in Chapter 3 of Kanovei \cite{KAN.borel.equiv}.  In some cases, these variations of the Rudin-Keisler order coincide with the ordinary Rudin-Keisler order when restricted to the class of ultrafilters (or maximal ideals).  For instance, the Kat\v{e}tov order, $\leqK$, has this property, where $\I \leqK \J$ iff there is $f: \omega \rightarrow \omega$ such that $x \in \I \implies f^{-1} [x] \in \J$, for each $x \subseteq \omega$.  In what follows, we consider another modification of the Rudin-Keisler order which again agrees with the usual Rudin-Keisler order on the set of ultrafilters.  We will, however, be interested only in projective (hence non-maximal) ideals on $\omega$.

\subsection{The wRK-order}

The following is our principal definition, which we introduced in \cite{BEROS.universal}.

\begin{definition}
If $\I$ and $\J$ are ideals on $\omega$, we say that $\I$ is {\em weak Rudin-Keisler reducible} to $\J$ iff there is an infinite set $A \subseteq \omega$ and a function $f: A \rightarrow \omega$ such that $x \in \I \iff f^{-1} [x] \in J$, for each $x \in 2^\omega$.  We write $\I \leqwRK \J$ and call the map $f$ a {\em weak Rudin-Keisler reduction} of $\I$ to $\J$.

If $\mathcal C$ is a family of ideals and $\J \in \mathcal C$ is such that $\I \leqwRK \J$, for each $\I \in \mathcal C$, then we say that $\J$ is a {\em wRK-complete} $\mathcal C$ ideal on $\omega$.
\end{definition}

Note that, as with the usual Rudin-Keisler order, if $\U \leqwRK \V$ and $\V$ is an ultrafilter, then so is $\U$.  For the sake of completeness, we verify that $\leqRK$ and $\leqwRK$ coincide on the set of ultrafilters.

\begin{proposition}
If $\U$ and $\V$ are ultrafilters on $\omega$, then $\U \leqwRK \V$ iff $\U \leqRK \V$.
\end{proposition}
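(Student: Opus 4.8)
The plan is to note that one implication is free and to devote the work to the other. Since an ordinary Rudin-Keisler reduction is in particular a weak one (take the infinite set $A = \omega$), the direction $\U \leqRK \V \implies \U \leqwRK \V$ requires nothing at all. The real content is the converse: given a weak reduction, i.e.\ an infinite $A \subseteq \omega$ and a map $f : A \to \omega$ with $x \in \U \iff f^{-1}[x] \in \V$ for every $x \subseteq \omega$, I want to fabricate a total function $g : \omega \to \omega$ witnessing $\U \leqRK \V$.

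The first step is to measure the domain $A$ against $\V$. Applying the reduction condition to $x = \omega$, and noting that $f^{-1}[\omega] = A$ while $\omega \in \U$, yields immediately that $A \in \V$. This single observation is the hinge of the whole argument, so I would record it at the very start.

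The second step is to extend $f$ to a total map in the laziest possible way: fix some $k_0 \in \omega$ and set $g(n) = f(n)$ for $n \in A$ and $g(n) = k_0$ for $n \in \omega \setminus A$. For an arbitrary $x \subseteq \omega$ the preimage $g^{-1}[x]$ then equals $f^{-1}[x]$ when $k_0 \notin x$, and equals $f^{-1}[x] \cup (\omega \setminus A)$ when $k_0 \in x$. In the first case membership of $g^{-1}[x]$ in $\V$ is the very same assertion as membership of $f^{-1}[x]$, so the biconditional transfers verbatim. In the second case I would exploit $A \in \V$: since $A \in \V$, the filter laws give $B \in \V \iff B \cap A \in \V$ for any $B$, and here $g^{-1}[x] \cap A = f^{-1}[x]$, so again $g^{-1}[x] \in \V \iff f^{-1}[x] \in \V$. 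Combining the two cases, $g^{-1}[x] \in \V \iff f^{-1}[x] \in \V \iff x \in \U$ for all $x$, which is exactly the assertion that $g$ is an ordinary Rudin-Keisler reduction of $\U$ to $\V$.

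The step I expect to demand the most care is the verification in the case $k_0 \in x$, where the ``garbage'' set $\omega \setminus A$ gets swept into the preimage; the point to get right is that $A \in \V$ forces $\omega \setminus A \notin \V$ (properness of the filter), so that membership in $\V$ is insensitive to this modification. It is worth remarking that maximality of $\V$ is never used beyond this: it is solely the fact $A \in \V$, extracted in the first step, that drives the proof, and the same extension argument would work for any proper filter $\V$ containing the relevant domain.
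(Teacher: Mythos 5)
Your proof is correct, and it takes a genuinely different route from the paper's. Both arguments hinge on the same opening observation, that $A = f^{-1}[\omega] \in \V$, but they diverge in how they totalize $f$. The paper sets $y = f[A]$ (noting $y \in \U$), uses the maximality of $\U$ to split off $z \subseteq y$ with $z \in \U$ and $y \setminus z$ infinite, keeps $f$ on $B = f^{-1}[z]$, and sends $\omega \setminus B$ bijectively onto $y \setminus z$; it then verifies only the implication $x \in \U \implies h^{-1}[x] \in \V$, letting the maximality of the two ultrafilters supply the converse. Your constant extension $g \upto (\omega \setminus A) \equiv k_0$ replaces all of this with a two-case computation of $g^{-1}[x]$ together with the equivalence $B' \in \V \iff B' \cap A \in \V$, valid in any filter containing $A$; accordingly you obtain the full biconditional directly and, as you observe, with no appeal to maximality, so your argument actually proves the stronger statement in which $\V$ is merely a proper filter. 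It also silently handles principal ultrafilters, for which the paper's choice of $z$ is problematic since $y = f[A]$ can then be finite. What the paper's more elaborate rerouting buys is that $h$ inherits injectivity or finite-to-one-ness from $f$ (because $h[B] \subseteq z$ and $h[\omega \setminus B] = y \setminus z$ are disjoint), a property a constant extension destroys and one relevant to Rudin--Blass-type refinements, though immaterial to the statement at hand. One small correction to your closing remark: the operative fact in the case $k_0 \in x$ is not properness, i.e., not merely $\omega \setminus A \notin \V$ --- in a filter, union with a non-member can promote a non-member to a member (take the Fr\'echet filter with the evens and the odds) --- but exactly the intersection equivalence you do prove, namely $g^{-1}[x] \cap A = f^{-1}[x]$ combined with $A \in \V$; for an ultrafilter, primeness would serve as well.
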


\begin{proof}
One half of the claim follows from the definitions.  For the other half, suppose that $\U \leqwRK \V$ via $f : A \rightarrow \omega$.  We wish to define a Rudin-Keisler reduction of $\U$ to $\V$.  Let $y = f[A]$ and note that $y \in \U$, since $f^{-1} [y] = A = f^{-1} [\omega] \in \V$.  Let $z \subseteq y$ be such that $z \in \U$ and $y \setminus z$ is infinite.  Such a $z$ exists by the maximality of $\U$.  Let $B = f^{-1}[z]$ and define $h: \omega \rightarrow \omega$ by letting $h \upto B = f\upto B$ and $h \upto (\omega \setminus B)$ be any fixed bijection between $\omega \setminus B$ and $y \setminus z$.  Since $\U$ and $\V$ are ultrafilters, to verify that $h$ is a Rudin-Keisler reduction of $\U$ to $\V$, it will suffice to check that $h^{-1} [x] \in \V$, whenever $x \in \U$.  Indeed, suppose that $x \in \U$.  We have
\[
h^{-1} [x] = h^{-1} [x\cap z] \cup h^{-1} [x \setminus z] \supseteq h^{-1} [x\cap z] = f^{-1} [ x \cap z ] \in \V,
\]
since $x,z \in \U$ and $f$ is a weak Rudin-Keisler reduction of $\U$ to $\V$.  It follows from the upward closure of $\V$ that $h^{-1} [x] \in \V$.
\end{proof}

\subsection{Factor algebras}

Observe that if $\I \leqwRK \J$, via $f$, there is an embedding of the Boolean algebra $2^\omega / \I$ into $2^\omega / \J$.  To see this, let $\pi_\I$ and $\pi_\J$ be the quotient maps onto $2^\omega / \I$ and $2^\omega / \J$, respectively.  One may define an injective homomorphism $\tilde f : 2^\omega / \I \rightarrow 2^\omega / \J$ by \[
\tilde f ( \pi_\I (x) ) = \pi_\J ( f^{-1} [x]).
\]
Thus, if $\J$ is a wRK-complete ideal for a class, $\mathcal C$, of ideals, then $2^\omega / \I$ embeds in $2^\omega / \J$, for each $\I \in \mathcal C$.  In other words, the algebra $2^\omega / \J$ is injectively universal for algebras of the form $2^\omega / \I$, with $\I \in \mathcal C$.

In \cite{KECHRIS.rigidity.properties}, Alexander Kechris discusses the ``Borel cardinality'' of a factor algebra of the form $2^\omega / \I$.  Kechris regards $2^\omega / \J$ as having greater Borel cardinality than $2^\omega / \I$ if there is an embedding $ \varphi : 2^\omega / \I \rightarrow 2^\omega / \J$ which has a Borel-measurable lifting to $2^\omega$.  That is, there exists a Borel-measurable map $\varphi^* : 2^\omega \rightarrow 2^\omega$ such that the diagram 
\[
\begin{array}{ccc}
2^\omega & \xrightarrow{\ \varphi^* \ } & 2^\omega\\
\hspace{1.4ex}\left\downarrow {\scriptstyle \pi_I} \rule[0ex]{0ex}{2.5ex} \right .& & \hspace{1.4ex}\left\downarrow {\scriptstyle \pi_J}\rule[0ex]{0ex}{2.5ex}\right .\\
2^\omega / \I & \xrightarrow{\;\ \varphi \ \;} & 2^\omega / \J
\end{array}
\]
commutes.  Equivalently, $2^\omega / \J$ has greater Borel cardinality than $2^\omega / \I$ iff there is a Borel-measurable map $h : 2^\omega \rightarrow 2^\omega$ such that $x \sdif y \in \I \iff h(x) \sdif h(y) \in \J$, for all $x,y \in 2^\omega$, where ``$x\sdif y$'' denotes the symmetric difference of $x$ and $y$. 

Since all maps of the form $x \mapsto f^{-1}[x]$ (for $f : \omega \rightarrow \omega$) are continuous on $2^\omega$ (in particular, Borel-measurable), our notion of $\I \leqwRK \J$ implies that $2^\omega / \J$ has greater Borel cardinality than $2^\omega / \I$.  It follows that if $\J$ is a wRK-complete ideal for some class, $\mathcal C$, of ideals, then $2^\omega / \J$ has maximum Borel cardinality among factor algebras by ideals in $\mathcal C$.

\subsection{wRK-complete ideals}

In Section~\ref{S3}, we prove our principle results.  These establish the existence of wRK-complete ideals for the classes of $\mathbf \Sigma^1_n$ ideals and $\mathbf \Pi^1_1$ ideals.

\begin{theorem}\thmref{T1}
For each $n \in \omega$, there is a wRK-complete $\mathbf \Sigma^1_n$ ideal on $\omega$.
\end{theorem}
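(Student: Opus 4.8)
The plan is to produce a single $\mathbf\Sigma^1_n$ ideal $\J$ into which every $\mathbf\Sigma^1_n$ ideal weakly reduces, built from a universal set. Fix a universal $\mathbf\Sigma^1_n$ set $U\subseteq\CS\times\CS$, so that every $\mathbf\Sigma^1_n$ ideal $\I$ on $\omega$ arises as a fiber $U_r=\{x:(r,x)\in U\}$ for some $r\in\CS$. The first conceptual point is to reformulate a weak Rudin--Keisler reduction $f:A\to\omega$ as a pullback along a partition: writing $A^{(j)}=f^{-1}[\{j\}]$, one has $f^{-1}[x]=\bigcup_{j\in x}A^{(j)}$, so a reduction is exactly a partition of an infinite $A\subseteq\omega$ together with the demand that the preimage operation carry $\J$ back to $\I$. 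Since both $A$ and the partition may depend on $r$, the continuum of parameters indexing the (boldface) class is absorbed into the choice of \emph{domain} of the reduction; this is precisely what allows a single countable ideal to be complete for a class of size continuum.

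To encode the parameter into the domain, I would identify $\omega$ with $2^{<\omega}\times\omega$ and, for $b\in\CS$, let $B_b=\{(b\upto k,m):k,m\in\omega\}$ be the ``branch'' above $b$. First I would define an auxiliary family $\mathcal G$: put $w\in\mathcal G$ iff there exists $b\in\CS$ such that $w\subseteq B_b$, the set of first coordinates occurring in $w$ is cofinal in $b$, and the decoded column set $d_b(w)=\{m:\exists k\,(b\upto k,m)\in w\}$ lies in $U_b$. The existential quantifier over $b$ is a real quantifier applied to a $\mathbf\Sigma^1_n$ matrix, so $\mathcal G$ is again $\mathbf\Sigma^1_n$ by closure of the pointclass under $\exists^\CS$. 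I would then let $\J$ be the ideal generated by $\mathcal G\cup\Fin$, so that $\J$ is an ideal by construction; since generating an ideal adds only one number quantifier (the size of a finite cover) in front of a finite conjunction of $\mathbf\Sigma^1_n$ and arithmetic conditions, $\J$ remains $\mathbf\Sigma^1_n$.

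For the reduction, given $\I=U_r$ I would take $A=B_r$ and let $f_r:B_r\to\omega$ be the projection $(r\upto k,m)\mapsto m$. Then $f_r^{-1}[x]=\{(r\upto k,m):k\in\omega,\ m\in x\}$ has first coordinates cofinal in $r$ and decoded column set exactly $x$, so $f_r^{-1}[x]\in\mathcal G\iff x\in U_r=\I$, witnessed by $b=r$ (the case $x=\emptyset$ landing harmlessly in $\Fin\subseteq\J$). The main obstacle, and the step I expect to require the most care, is the converse \emph{tightness} of the generated ideal: I must rule out that a finite union of $\mathcal G$-sets and finite sets covers $f_r^{-1}[x]$ when $x\notin\I$. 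The cofinality guard is designed exactly for this. For each $m\in x$ the full, infinite column $\{(r\upto k,m):k\}$ must be covered; a finite summand meets it finitely, and an off-branch piece $w_i$ (with witness $b_i\neq r$) meets $B_r$ only along the common stem of $b_i$ and $r$, hence meets each $r$-column finitely. Thus the infinite tail of each $m$-column is covered by branch-$r$ pieces, forcing $m\in d_r(w_i)$ for some such $i$; since each $d_r(w_i)\in U_r$ and $U_r$ is an ideal, we get $x\subseteq\bigcup_i d_r(w_i)\in U_r$ and so $x\in U_r$, contradicting $x\notin\I$. Making this branch-geometry argument airtight (uniqueness of the witnessing branch from cofinal support, and the bookkeeping for overlapping stems) is the crux; once it is in place we have $f_r^{-1}[x]\in\J\iff x\in\I$, so $\I\leqwRK\J$ and $\J$ is the desired wRK-complete $\mathbf\Sigma^1_n$ ideal.
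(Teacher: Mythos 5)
Your proposal is correct and takes essentially the same route as the paper's own proof: the paper likewise fixes a universal $\mathbf\Sigma^1_n$ set, codes the parameter $\tau$ into the \emph{domain} of the reduction via a partition $\{A_p : p \in 2^{<\omega}\}$ of $\omega$ (your identification $\omega \cong 2^{<\omega}\times\omega$), takes $\J$ to be the ideal generated by branch-supported sets that decode into $\mathcal U_\tau$, reduces $\I = \mathcal U_\tau$ by decoding along the branch $\tau$, and proves tightness of the generated ideal using exactly your branch-geometry fact that generators witnessed by $b \neq \tau$ live off the branch past the finite common stem. The only cosmetic difference is in the generators: you demand cofinal support with the \emph{union} of the columns in $U_b$, while the paper demands the \emph{same} set copied at every node $p \subset \tau$ (and emptiness off the branch), both devices serving to pin down the witnessing branch so that a finite cover of $f^{-1}[x]$ decodes to a finite cover of $x$ inside $\I$.
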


\begin{theorem}\thmref{T2}
There is a wRK-complete $\mathbf \Pi^1_1$ ideal on $\omega$.
\end{theorem}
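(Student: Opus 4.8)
The plan is to build $\J$ as an ``initial-segment sum'' of a universal family of $\mathbf\Pi^1_1$ ideals. I would fix a set $U \subseteq \CS \times \CS$ that is universal for $\mathbf\Pi^1_1$ subsets of $\CS$, identify $\omega$ with $2^{<\omega}$, and for $y \subseteq 2^{<\omega}$ and $p \in \CS$ consider the slice of $y$ along the branch $p$, namely $y_p = \{ n \in \omega : p \upto n \in y\}$. The map $(y,p) \mapsto y_p$ is continuous, and for fixed $p$ the map $y \mapsto y_p$ is a Boolean homomorphism. Given a $\mathbf\Pi^1_1$-uniform assignment $p \mapsto \I_p$ of ideals, I would set
\[
\J = \{\, y \subseteq 2^{<\omega} : \forall p \in \CS\ (\, y_p \in \I_p \,) \,\}.
\]
Since $\mathbf\Pi^1_1$ is closed under universal real quantification and $(y,p)\mapsto y_p$ is continuous, $\J$ will be $\mathbf\Pi^1_1$; and because each $y \mapsto y_p$ is a Boolean homomorphism, $\J$ will be an ideal as soon as every $\I_p$ is one.

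The reduction has two ingredients. First, every $\mathbf\Pi^1_1$ ideal $\I$ is $\leqwRK$ a $\mathbf\Pi^1_1$ ideal containing $\Fin$: on $\omega \times \omega$ put $\I^+ = \{ z : \{ n : \{k : (n,k)\in z\}\text{ is infinite}\}\in\I\}$, which is a $\mathbf\Pi^1_1$ ideal with $\Fin \subseteq \I^+$, and the map $(n,k)\mapsto n$ witnesses $\I \leqwRK \I^+$ (the preimage of $x$ is $x\times\omega$, whose infinite columns are exactly $x$). Second, if $\I^+ = \I_p$ for some $p$ and $f_p(p\upto n) = n$ on $\{p\upto n : n\in\omega\}$, then $f_p^{-1}[x] = \{p\upto n : n\in x\}$ has $(f_p^{-1}[x])_p = x$, while for $q \neq p$ the slice $(f_p^{-1}[x])_q$ is finite; as every $\I_q \supseteq \Fin$, the only active constraint is the one along $p$, giving $x \in \I^+ \iff f_p^{-1}[x]\in\J$. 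Composing the two reductions (using transitivity of $\leqwRK$) yields $\I \leqwRK \J$ for every $\mathbf\Pi^1_1$ ideal $\I$.

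The main obstacle is producing the assignment $p\mapsto\I_p$ with all three required properties simultaneously: it must be $\mathbf\Pi^1_1$-uniform, every $\I_p$ must be a genuine ideal containing $\Fin$ (so that $\J$ is an ideal and the finite cross-slices above are harmless), and every $\mathbf\Pi^1_1$ ideal containing $\Fin$ must occur as some $\I_p$. Downward closure and containment of $\Fin$ are cheap: replacing $U_p$ by $\{x : \forall x'\subseteq x\ (p,x')\in U\}\cup\Fin$ stays $\mathbf\Pi^1_1$, is automatically downward closed, contains $\Fin$, and is unchanged when $U_p$ already is such an ideal. \emph{Finite-union closure}, however, is the crux: ``$U_p$ is closed under unions'' is a $\Pi^1_1 \to \Pi^1_1$ implication under two real quantifiers, hence only $\mathbf\Pi^1_2$ in $p$, so one cannot simply test it and default the bad codes to $\Fin$. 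I expect this to be the hard part, and it is precisely where the argument becomes $\mathbf\Pi^1_1$-specific and parallels Hjorth's theorem: I would represent each $\mathbf\Pi^1_1$ ideal by a continuous assignment of trees (equivalently by a $\mathbf\Pi^1_1$-rank) and use the boundedness theorem to perform the union-closure uniformly at the level of the ranks, where forming a finite union corresponds to a bounded --- hence $\mathbf\Pi^1_1$-preserving --- operation. Once $p\mapsto\I_p$ is in hand, the first two paragraphs assemble the wRK-complete $\mathbf\Pi^1_1$ ideal, and the same construction, read through $x\, E_\J\, y \iff x\sdif y\in\J$, recovers Hjorth's complete $\mathbf\Pi^1_1$ equivalence relation.
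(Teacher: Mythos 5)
Your outer machinery is sound and is essentially the paper's own coding device: the branch-indexed sum $\J$, the continuity of $(y,p) \mapsto y_p$, the Boolean-homomorphism argument that $\J$ is an ideal, and the branch reductions $f_p$ are all correct. (Your ``$\forall p$'' formulation with $\Fin \subseteq \I_p$ is a cosmetic variant of the paper's condition $(\forall \tau)(\forall^\infty p \subset \tau)(h_p^{-1}[x] \in \mathcal A_\tau)$, which instead only needs $\emptyset$ in every section; and your auxiliary reduction $\I \leqwRK \I^+$ correctly discharges the $\Fin$-containment hypothesis, modulo the harmless convention that ideals contain $\emptyset$.) But the entire weight of the theorem rests on the ingredient you explicitly leave unproved: a single $\mathbf\Pi^1_1$ set $\mathcal A \subseteq 2^\omega \times 2^\omega$ whose cross-sections are exactly the $\mathbf\Pi^1_1$ ideals --- in particular, uniform enforcement of closure under finite unions, which, as you observe, is naively only $\mathbf\Pi^1_2$ in the code. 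Your one-sentence plan for this step --- code ideals by trees or ranks and invoke boundedness because ``forming a finite union corresponds to a bounded operation'' --- is not an argument: the rank of a code does not interact with unions in any evident bounded way, and the sketch does not say how to repair a code whose section fails union-closure while leaving genuine ideal codes untouched. As it stands, you have reduced the theorem to its hardest step and then gestured at that step.

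The paper fills exactly this hole with Lemma~\ref{L1}, by a different and quite concrete device: iterated rank-pruning. Starting from a universal $\mathbf\Pi^1_1$ set $\mathcal A^{(0)}$ (padded so that every section contains $\emptyset$), one fixes a $\mathbf\Pi^1_1$-rank $\varphi_n$ on $\mathcal A^{(n)}$ and lets $\mathcal A^{(n+1)}$ retain $(\tau, x)$ only if every subset of $x$ lies in $\mathcal A^{(n)}_\tau$ and $x \cup y \in \mathcal A^{(n)}_\tau$ for every $y \in \mathcal A^{(n)}_\tau$ with $(\tau,y) \leq_{\varphi_n} (\tau,x)$; the definability properties of ranks keep each stage $\mathbf\Pi^1_1$. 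Setting $\mathcal A = \bigcap_n \mathcal A^{(n)}$, given $x, y \in \mathcal A_\tau$, at each stage $n$ one of the two has $\varphi_n$-rank at most the other's, so condition on unions applies and $x \cup y \in \mathcal A^{(n)}_\tau$ for every $n$; and when a section is already an ideal, the pruning removes nothing, so every $\mathbf\Pi^1_1$ ideal survives. Note that this mechanism uses only the prewellordering property together with closure under real quantification and countable intersections, which is precisely why the paper's proof relativizes to $\mathbf\Pi^1_{2n+1}$ under PD, whereas your proposed tree/boundedness route is specific to $\Pi^1_1$ and is in the spirit of the effective argument of Hjorth that the paper is deliberately avoiding. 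If you replace your sketched step by this pruning lemma, the rest of your write-up goes through.
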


The proof of Theorem~\ref{T2} uses the prewellordering property of the pointclass $\mathbf \Pi^1_1$ as well as closure under universal quantification over the reals and closure under countable unions and intersections.  As such, the proof carries over to any other ranked pointclass with similar closure properties.  In particular, assuming projective determinacy (PD), we obtain the following corollary to the proof of Theorem~\ref{T2}.

\begin{corollary}[PD]
For each $n$, there is a wRK-complete $\mathbf \Pi^1_{2n+1}$ ideal on $\omega$.
\end{corollary}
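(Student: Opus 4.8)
The plan is to observe that the argument for Theorem~\ref{T2} is not really about $\mathbf \Pi^1_1$ in particular, but only about the abstract features of the pointclass that it exploits. Accordingly, I would first isolate exactly which properties of $\mathbf \Pi^1_1$ the proof of Theorem~\ref{T2} invokes. As indicated in the remark preceding the corollary, these are: (i) the prewellordering property; (ii) closure under universal quantification over $2^\omega$; and (iii) closure under countable unions and intersections. Implicit in any such construction is also closure under continuous preimages, since every weak Rudin-Keisler reduction $x \mapsto f^{-1}[x]$ is continuous; this is automatic for projective pointclasses. The strategy is then to re-run the proof verbatim with $\mathbf \Pi^1_{2n+1}$ in place of $\mathbf \Pi^1_1$, checking that each of (i)--(iii) survives.

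Properties (ii) and (iii) hold for $\mathbf \Pi^1_{2n+1}$ outright, in ZFC: being a relativized $\Pi$-class, $\mathbf \Pi^1_{2n+1}$ is closed under $\forall^{2^\omega}$, and every projective pointclass is closed under countable unions and intersections, as well as under continuous (indeed Borel) preimages. Thus the only genuinely nontrivial point---and the only place PD is needed---is the prewellordering property (i).

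For (i) I would appeal to the First Periodicity Theorem of Moschovakis. Starting from the classical fact that $\mathbf \Pi^1_1$ has the prewellordering property in ZFC, one propagates it two levels at a time: closure of the prewellordering property under $\exists^{2^\omega}$, which is a theorem of ZFC, passes it from $\mathbf \Pi^1_{2n-1}$ to $\mathbf \Sigma^1_{2n} = \exists^{2^\omega}\mathbf \Pi^1_{2n-1}$, and the First Periodicity Theorem, applied under determinacy for $\mathbf \Delta^1_{2n}$ games, transfers it across the universal quantifier from $\mathbf \Sigma^1_{2n}$ to $\mathbf \Pi^1_{2n+1} = \forall^{2^\omega}\mathbf \Sigma^1_{2n}$. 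Under PD all the requisite $\mathbf \Delta^1_{2n}$-determinacy hypotheses hold, so $\mathbf \Pi^1_{2n+1}$ carries a $\mathbf \Pi^1_{2n+1}$-norm for every $n$.

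With all three properties in hand, the construction from the proof of Theorem~\ref{T2} goes through unchanged and produces a wRK-complete $\mathbf \Pi^1_{2n+1}$ ideal on $\omega$. I do not expect any new combinatorial obstacle here: the substantive mathematical content is entirely packaged inside the periodicity theorem being cited, and the case $n=0$ recovers Theorem~\ref{T2} itself. The one thing to watch for is that the proof of Theorem~\ref{T2}, when written out, does not secretly rely on some feature peculiar to the first level---for example a lightface refinement, an absoluteness argument, or the specific tree representation of $\mathbf \Pi^1_1$ sets---that would fail to transfer. Verifying that every step is phrased purely in terms of (i)--(iii) is the only real bookkeeping required.
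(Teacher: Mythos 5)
Your proposal is correct and is precisely the paper's intended argument: the paper offers no separate proof beyond the remark that the proof of Theorem~\ref{T2} uses only the prewellordering property, closure under $\forall^{2^\omega}$, and closure under countable unions and intersections, and hence carries over to any ranked pointclass with these closure properties, with PD supplying (via First Periodicity, cf.\ Theorems~34.4 and~39.2 in Kechris) that $\mathbf\Pi^1_{2n+1}$ is ranked. Your bookkeeping---ZFC closure properties, the $\exists^{2^\omega}$ transfer from $\mathbf\Pi^1_{2n-1}$ to $\mathbf\Sigma^1_{2n}$, periodicity from $\mathbf\Sigma^1_{2n}$ to $\mathbf\Pi^1_{2n+1}$, and the check that Theorem~\ref{T2} uses no effective theory peculiar to $\Pi^1_1$---is exactly the verification the paper leaves to the reader.
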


In Section~\ref{S4}, we prove similar results for ideals which are ``nontrivial'' in a certain sense.

\begin{definition}
An ideal $\I \subseteq 2^\omega$ is {\em proper} iff $\I \neq 2^\omega$ and $\I$ contains all finite subsets of $\omega$.
\end{definition}

We have the following result.

\begin{theorem}\thmref{T7}
There is a wRK-complete proper uncountable $\mathbf \Pi^1_1$ ideal on $\omega$.
\end{theorem}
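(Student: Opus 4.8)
The plan is to bootstrap from Theorem~\ref{T2}. Fix a wRK-complete $\mathbf\Pi^1_1$ ideal $\J_0$. I would first isolate two elementary facts. (1) A restricted transitivity of $\leqwRK$: if $\I\leqwRK\J_0$ via $f\colon A\to\omega$ and $\J_0\leqwRK\J$ via $g\colon B\to\omega$, and if $\omega\notin\I$ while $\Fin\subseteq\J$, then $\I\leqwRK\J$. Indeed $A=f^{-1}[\omega]\notin\J_0$ because $\omega\notin\I$, so $g^{-1}[A]\notin\J$, and hence $g^{-1}[A]$ is infinite (as $\Fin\subseteq\J$); the map $f\circ g$ restricted to the infinite set $g^{-1}[A]$ then witnesses $\I\leqwRK\J$, since $(f\circ g)^{-1}[x]=g^{-1}[f^{-1}[x]]$. (2) Nondegeneracy: $\omega\notin\J_0$, for otherwise $\J_0=2^\omega$ and no ideal omitting $\omega$ (such as $\Fin$) could wRK-reduce to it.

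The heart of the matter is to build a proper uncountable $\mathbf\Pi^1_1$ ideal $\J$ with $\J_0\leqwRK\J$; granting this, every proper uncountable $\mathbf\Pi^1_1$ ideal $\I$ reduces to $\J_0$ by Theorem~\ref{T2}, and since such $\I$ satisfies $\omega\notin\I$, fact~(1) yields $\I\leqwRK\J$, making $\J$ wRK-complete for the class. To build $\J$, I would blow each point of $\omega$ up to an infinite fibre. Fix a partition $\omega=\bigsqcup_n C_n$ with each $C_n$ infinite and put
\[
\J=\bigl\{x\subseteq\omega : \{n : x\cap C_n\text{ is infinite}\}\in\J_0\bigr\}.
\]
Letting $h\colon\omega\to\omega$ be the surjection collapsing each $C_n$ to $n$, so $h^{-1}[\{n\}]=C_n$, one has $h^{-1}[x]=\bigcup_{n\in x}C_n$ and therefore $\{n:h^{-1}[x]\cap C_n\text{ infinite}\}=x$; thus $x\in\J_0\iff h^{-1}[x]\in\J$, i.e.\ $\J_0\leqwRK\J$.

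It remains to confirm that $\J$ belongs to the target class. Since $x\mapsto\{n:x\cap C_n\text{ infinite}\}$ is monotone and subadditive, $\J$ is an ideal; being a Borel preimage of the $\mathbf\Pi^1_1$ set $\J_0$, it is $\mathbf\Pi^1_1$. Finite sets have only finite columns, so $\Fin\subseteq\J$, while $\omega\cap C_n=C_n$ for every $n$ gives $\omega\notin\J$ by~(2); hence $\J$ is proper. Finally, choosing a selector $S$ meeting each $C_n$ in a single point, every $x\subseteq S$ has finite columns and so lies in $\J$, giving $\mathcal P(S)\subseteq\J$ with $S$ infinite, whence $\J$ is uncountable.

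I expect the main obstacle to be properness rather than completeness. A naive transplant of $\J_0$ onto a subset of $\omega$ would force $\J$ to reflect membership of \emph{finite} sets in $\J_0$, which need not contain them, directly conflicting with the requirement $\Fin\subseteq\J$. The blow-up is designed exactly to defuse this: because $h$ has infinite fibres, finite subsets of $\omega$ never arise as the relevant preimages, so the reduction coexists with $\Fin\subseteq\J$. The only remaining delicate point is the domain bookkeeping in~(1), where nondegeneracy of $\I$ and $\Fin\subseteq\J$ are precisely what force the composed reduction to have infinite domain.
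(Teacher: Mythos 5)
Your proof is correct, but it takes a genuinely different route from the paper's. The paper first proves Theorem~\ref{T3} --- a universal set for proper uncountable co-analytic ideals, obtained by elaborating the rank-pruning construction of Lemma~\ref{L1} with extra conditions (each section is seeded with a powerset $\mathcal P(f((\tau)_0))$, the universal set is cut down to coinfinite sets, and the result is closed under finite variants) --- and then derives Theorem~\ref{T7} by the same $\{A_p : p \in 2^{<\omega}\}$ coding that produced Theorem~\ref{T2} from Lemma~\ref{L1}. You bypass parameterization entirely: you take the complete ideal $\J_0$ of Theorem~\ref{T2} as a black box and repair its defects via the fibre blow-up $\J = \{x : \{n : x \cap C_n \text{ is infinite}\} \in \J_0\}$, transporting completeness through your restricted-transitivity lemma. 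Both of your preliminary facts check out: in~(1) the identity $(f \circ g)^{-1}[x] = g^{-1}[f^{-1}[x]]$ holds on the domain $g^{-1}[A]$, and the hypotheses $\omega \notin \I$ and $\Fin \subseteq \J$ are exactly what make that domain infinite; in~(2) downward closure forces $\J_0 = 2^\omega$ and then no ideal omitting $\omega$ could reduce to it. Moreover the map $x \mapsto \{n : x \cap C_n \text{ is infinite}\}$ is Borel (Baire class two), so $\J$ is $\mathbf \Pi^1_1$, and your verifications of properness and uncountability are sound. (One elided micro-step: $\Fin \subseteq \J$ needs $\emptyset \in \J_0$, which holds since $\Fin \leqwRK \J_0$ gives $\emptyset = f^{-1}[\emptyset] \in \J_0$.) Comparing the two approaches: yours is more elementary --- no new $\mathbf \Pi^1_1$-ranks beyond those already inside Theorem~\ref{T2} --- it is class-agnostic, working verbatim for $\mathbf \Pi^1_{2n+1}$ under PD and for $\mathbf \Sigma^1_n$, since it uses only closure under Borel preimages and the existence of a complete ideal; and it actually yields slightly more than stated, namely that \emph{every} $\mathbf \Pi^1_1$ ideal other than $2^\omega$ itself reduces to your $\J$, because $\omega \notin \I$ is the only hypothesis your fact~(1) imposes on $\I$. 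What your route does not buy is the paper's Theorem~\ref{T3}: the universal set for proper uncountable co-analytic ideals is of independent interest and is the main content of Section~\ref{S4}, and you recover the wRK-completeness corollary without producing any such parameterization.
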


\subsection{Projective equivalence relations}

There are anologies between the theory of equivalence relations and that of ideals.  In many cases, arguments in the context of ideals carry over to equivalence relations.  The proof of Theorem~\ref{T2} is an example of this.  

We begin by establishing some terminology.

Suppose that $E$ and $F$ are equivalence relations on a Polish space $X$.  A map $f : X \rightarrow X$ is a {\em reduction} of $E$ to $F$ iff, for each pair $x,y \in X$, one has $x E y \iff f(x) F f(y)$.  In general, one restricts attention to those reductions which are at least Borel-measurable.  In the case that there is a Borel-measurable reduction of $E$ to $F$, we say that $E$ is {\em Borel-reducible} to $F$ and write $E \leqB F$.  If $\mathcal C$ is a class of equivalence relations and $F \in \mathcal C$ is such that $E \leqB F$, for each $E \in \mathcal C$, then we say that $F$ is a {\em complete} $\mathcal C$ equivalence relation.

Hjorth \cite{HJORTH.universal.co-analytic} proved that there is a $\Pi^1_1$ (effectively co-analytic) equivalence relation on $2^\omega$ which is a complete co-analytic equivalence relation.  The key step in Hjorth's proof was the following parameterization theorem for co-analytic equivalence relations.

\begin{theorem}[Hjorth, 1996]\thmref{T5}
There is a universal $\Pi^1_1$ set for $\mathbf \Pi^1_1$ equivalence relations, i.e., there is a (lightface) $\Pi^1_1$ set $\mathcal E \subseteq 2^\omega \times 2^\omega \times 2^\omega$ such that the cross-sections $\mathcal E_\tau$ are exactly the $\mathbf \Pi^1_1$ equivalence relations on $2^\omega$.
\end{theorem}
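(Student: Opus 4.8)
The plan is to build the universal set $\mathcal E$ by starting from a universal $\Pi^1_1$ set for arbitrary (lightface) $\Pi^1_1$ subsets of $2^\omega \times 2^\omega$ and then "projecting" each cross-section onto the largest equivalence relation it contains (or, dually, correcting it to an equivalence relation). Concretely, fix a universal $\Pi^1_1$ set $\mathcal U \subseteq 2^\omega \times (2^\omega \times 2^\omega)$ whose cross-sections $\mathcal U_\tau$ range over all $\mathbf\Pi^1_1$ binary relations on $2^\omega$. The desired $\mathcal E$ should be a $\Pi^1_1$ set such that (i) every cross-section $\mathcal E_\tau$ is an equivalence relation, and (ii) whenever $\mathcal U_\tau$ happens already to be an equivalence relation, $\mathcal E_\tau = \mathcal U_\tau$. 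Property (ii) guarantees that the cross-sections of $\mathcal E$ exhaust all $\mathbf\Pi^1_1$ equivalence relations, while (i) guarantees they are all equivalence relations; together these are exactly what the theorem demands.

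First I would try to enforce reflexivity, symmetry, and transitivity by a direct definability massage. Reflexivity and symmetry are cheap: replacing a relation $R$ by $\{(x,y): x=y \text{ or } (xRy \text{ and } yRx)\}$ is a $\Pi^1_1$ operation (closed under finite Boolean combinations and diagonal), and leaves $R$ unchanged when $R$ is already a reflexive symmetric relation. The genuine difficulty is \emph{transitivity}: the natural fix, taking the transitive closure, requires an existential quantifier over finite chains $x = z_0, z_1, \dots, z_k = y$, which is a countable union of projections and hence lands in $\mathbf\Sigma^1_2$, not $\mathbf\Pi^1_1$. This is where the structural properties of $\mathbf\Pi^1_1$ singled out in the excerpt — the prewellordering property and closure under $\forall^{2^\omega}$ and countable unions/intersections — must do the work.

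The hard part, then, is to obtain a $\Pi^1_1$-good notion of "the equivalence relation generated by $\mathcal U_\tau$" that does not escape the pointclass. The plan is to exploit the $\Pi^1_1$-ranked structure: using a $\Pi^1_1$-rank (or equivalently the prewellordering property), one can carry out a bounded, stagewise construction of the transitive closure in which the number of chain-steps needed is controlled by an ordinal rank, and closure of $\Pi^1_1$ under $\forall^{2^\omega}$ lets one quantify "for all witnessing chains of bounded rank" while staying $\Pi^1_1$. An alternative, perhaps cleaner, route is \emph{not} to close up $\mathcal U_\tau$ but to test, in a $\Pi^1_1$ way, whether $\mathcal U_\tau$ is \emph{already} an equivalence relation: the statement "$\mathcal U_\tau$ is reflexive, symmetric and transitive" is itself $\Pi^1_1$ in $\tau$ (transitivity reads $\forall x\, \forall y\, \forall z\,(xRy \wedge yRz \to xRz)$, a universal quantification over reals of a $\Pi^1_1$ matrix, hence $\Pi^1_1$). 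One can then define
\[
\mathcal E \;=\; \{(\tau,x,y) : \mathcal U_\tau \text{ is an equivalence relation and } (x,y)\in \mathcal U_\tau\} \;\cup\; \{(\tau,x,y): \mathcal U_\tau \text{ is not an equivalence relation and } x=y\},
\]
which is a Boolean combination of $\Pi^1_1$ sets over the parameter $\tau$.

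To make this last display genuinely $\Pi^1_1$ — the sticking point being that the second clause involves the \emph{negation} of a $\Pi^1_1$ condition — I would again lean on the ranked/closure structure, arranging the two cases via a $\Pi^1_1$ separation or reduction so that the union remains $\Pi^1_1$ (for instance, by a uniform selection argument among the finitely many axioms that can fail, using the prewellordering property to pick, in a $\Pi^1_1$-uniform way, the equality relation as the canonical fallback). I would verify (i) cross-section by cross-section — in the first case $\mathcal E_\tau = \mathcal U_\tau$ is an equivalence relation by hypothesis, in the second $\mathcal E_\tau$ is the equality relation, which is $\Pi^1_1$ and an equivalence relation — and (ii) by noting that if $\tau$ codes an equivalence relation then the first clause applies and $\mathcal E_\tau = \mathcal U_\tau$, so every $\mathbf\Pi^1_1$ equivalence relation appears. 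The main obstacle throughout is keeping the corrected relation inside $\mathbf\Pi^1_1$ rather than slipping into $\mathbf\Sigma^1_2$, and it is precisely the closure and ranking properties of $\mathbf\Pi^1_1$ flagged after Theorem~\ref{T2} that should be the decisive tools.
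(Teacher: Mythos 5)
There is a genuine gap, and it is in the step you actually commit to. Your ``cleaner'' route rests on the claim that the predicate $P(\tau)$ saying ``$\mathcal U_\tau$ is an equivalence relation'' is $\Pi^1_1$ in $\tau$, on the grounds that transitivity is a universal real quantification of a $\Pi^1_1$ matrix. The matrix is not $\Pi^1_1$: written as a disjunction, $(x,y)\in\mathcal U_\tau \wedge (y,z)\in\mathcal U_\tau \rightarrow (x,z)\in\mathcal U_\tau$ is $\Sigma^1_1 \vee \Pi^1_1$, because the \emph{antecedent} is $\Pi^1_1$ and its negation is $\Sigma^1_1$; the same applies to symmetry. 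Prefixing $\forall x\,\forall y\,\forall z$ therefore lands $P$ only in $\Pi^1_2$, and in general this set of codes is properly $\Pi^1_2$, so no separation, reduction, or prewellordering argument \emph{inside} $\Pi^1_1$ can repair the case-split definition of $\mathcal E$. Note that the difficulty is not where you located it: your displayed set simplifies to $\{(\tau,x,y): x=y\} \cup \{(\tau,x,y): P(\tau) \text{ and } (x,y)\in\mathcal U_\tau\}$, whose second clause is the positive one; it would be $\Pi^1_1$ if $P$ were, and the negated clause you worried about disappears. The obstruction is $P$ itself, and it is exactly why any correct argument must use the rank \emph{locally}, pair by pair, rather than to certify a global property of the cross-section.

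The paper's proof does precisely this, and in the direction opposite to your first sketch: it \emph{prunes} rather than closes up. With $\varphi$ a $\Pi^1_1$-rank on the universal set $\mathcal V$ itself, one keeps $(\alpha,\tau,x,y)$ iff $x=y$ or $(x,y),(y,x)\in\mathcal V_{\alpha,\tau}$ and every transitivity test whose extra pair has $\varphi$-rank $\leq \varphi(\alpha,\tau,x,y)$ succeeds. The rank-bounded antecedent ``$(y,z)\in\mathcal V_{\alpha,\tau}$ and $(\alpha,\tau,y,z)\leq_\varphi(\alpha,\tau,x,y)$'' has a $\Sigma^1_1$ form by the definability property of ranks, so the implication and its universal quantifier stay $\Pi^1_1$ --- this is the maneuver your transitivity computation was missing --- and nothing is removed when $\mathcal V_{\alpha,\tau}$ is already an equivalence relation. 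Since one pruning step does not yield transitivity, the construction is iterated: by the $s$-$n$-$m$ theorem for a good universal system there is a recursive $f$ with $\mathcal V^*_\alpha = \mathcal V_{f(\alpha)}$, the sequence $\alpha_{n+1}=f(\alpha_n)$ is recursive, and $\mathcal A=\bigcap_n \mathcal V_{\alpha_n}$ is $\Pi^1_1$; given $(x,y),(y,z)\in\mathcal A_\tau$ and any $n$, whichever pair is $\leq_\varphi$-smaller at stage $n$ triggers the pruning condition and gives $(x,z)\in\mathcal V_{\alpha_n,\tau}$. Your first paragraph gestures at rank-bounded chains for a \emph{transitive closure}, but closing up is an existential ($\Sigma$-side) operation and cannot stay $\Pi^1_1$; without the pruning idea and the uniform recursive iteration, neither of your routes goes through.
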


Hjorth's proof involves admissable ordinals and critical use of the Lusin-Sierpinski Theorem (see Theorem 4A.4 in Moschovakis \cite{MOSCHOVAKIS.dst}) and is thus intimately connected with the effective theory of $\Pi^1_1$ sets.  Even under PD, this theory is not known to generalize to the classes $\Pi^1_{2n+1}$.  By combining elements of the proof of our Theorem~\ref{T2} with an $s$-$n$-$m$ Theorem argument, we obtain Hjorth's theorem without use of any effective theory specific to the class $\Pi^1_1$.  In fact, our argument generalizes, assuming PD, to yield a universal $\Pi^1_{2n+1}$ set for $\mathbf \Pi^1_{2n+1}$ equivalence relations.  In Section~\ref{S5}, we prove the following result.

\begin{theorem}[PD]\thmref{T4}
There is a universal $\Pi^1_{2n+1}$ set for $\mathbf \Pi^1_{2n+1}$ equivalence relations on $2^\omega$.
\end{theorem}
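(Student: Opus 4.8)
The plan is to follow the template of the proof of Theorem~\ref{T2}, trading the ideal axioms (downward closure and closure under finite unions) for the equivalence-relation axioms (reflexivity, symmetry, and transitivity), and to ``normalize'' an arbitrary section of a universal set into a genuine equivalence relation without leaving the pointclass. Write $\Gamma = \mathbf\Pi^1_{2n+1}$ for the boldface class. Under PD the periodicity theorems guarantee that $\Gamma$ has the prewellordering property, and $\Gamma$ is closed under $\forall^{2^\omega}$, under countable unions and intersections, and under continuous preimages, and it admits a lightface universal set --- exactly the features used in the proof of Theorem~\ref{T2}. First I would fix a lightface $\Pi^1_{2n+1}$ set $W \subseteq 2^\omega \times 2^\omega \times 2^\omega$ whose sections $W_\tau = \{(x,y) : (\tau,x,y) \in W\}$ enumerate all $\mathbf\Pi^1_{2n+1}$ binary relations on $2^\omega$, and then transform $W$ into a set $\mathcal E$ of the same complexity all of whose sections are equivalence relations, in such a way that $\mathcal E_\tau = W_\tau$ whenever $W_\tau$ already is one. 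Surjectivity onto the $\mathbf\Pi^1_{2n+1}$ equivalence relations is then immediate, since each such relation occurs as some $W_\tau$.

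Reflexivity and symmetry are cheap. The set defined by $(\tau,x,y) \in S \iff x = y \vee \big( (\tau,x,y) \in W \wedge (\tau,y,x) \in W \big)$ is again lightface $\Pi^1_{2n+1}$ (the diagonal is closed, and $\Gamma$ is closed under $\wedge$ and $\vee$), each section $S_\tau$ is reflexive and symmetric, and $S_\tau = W_\tau$ whenever $W_\tau$ was reflexive and symmetric. This is the analogue of passing to the downward-closed ``interior'' of a section in the ideal setting: it is a $\forall$-operation that preserves the pointclass and fixes the genuine objects.

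The crux is transitivity, which plays the role of closure under finite unions in Theorem~\ref{T2} and is where the prewellordering property and the $s$-$n$-$m$ theorem enter. One cannot simply take the transitive closure of $S_\tau$, since a chain $x = z_0, z_1, \dots, z_k = y$ is an existential quantifier over $2^\omega$ and the closure is only $\mathbf\Sigma^1_{2n+2}$ in general; nor is there a largest transitive subrelation to serve as a ``transitive interior,'' because transitivity is not preserved under unions. Instead I would use a $\Gamma$-norm $\varphi$ on $S$ to adjudicate the violations of transitivity --- the triples $(x,y,z)$ with $x \mathrel{S_\tau} y$, $y \mathrel{S_\tau} z$, and $\neg(x \mathrel{S_\tau} z)$ --- in order of increasing rank, deleting offending edges in a well-founded manner and letting $\mathcal E_\tau$ be the surviving relation. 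The well-foundedness supplied by $\varphi$ is precisely what allows this repair to be expressed by a single $\forall^{2^\omega}$ condition rather than by an unbounded iteration, so that $\mathcal E$ remains $\Pi^1_{2n+1}$; the $s$-$n$-$m$ theorem is then invoked to perform the repair uniformly in the code $\tau$ and to gather the result into one lightface $\Pi^1_{2n+1}$ set. The main obstacle is the verification that this norm-guided deletion really does yield a transitive relation: a naive ``delete the higher-norm edge of each bad triangle'' rule does not suffice, because an edge retained on account of one triangle may be excised on account of another, so the argument must follow the cascade of deletions carefully and use the well-ordering of ranks to see that it stabilizes on a transitive relation while fixing every $S_\tau$ that was already transitive.
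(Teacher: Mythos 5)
You have assembled the correct ingredients---a lightface universal set, adding the diagonal and symmetrizing, a $\Pi^1_{2n+1}$-rank to adjudicate transitivity violations, and the $s$-$n$-$m$ theorem---but the central step is left as a declared ``obstacle'' rather than proved, and the one concrete claim you make about how to discharge it is wrong. A single norm-guided pruning, i.e., keeping $(x,y)$ iff for every $z$ with $(y,z)$ in the relation and of rank $\leq$ that of $(x,y)$ one also has $(x,z)$ in the relation (and dually on the left), does \emph{not} produce a transitive relation in one pass: the conclusion ``$(x,z)$ is in the relation'' refers to the \emph{unpruned} relation, and $(x,z)$ may itself be deleted by the pruning while $(x,y)$ and $(y,z)$ both survive---exactly the cascade you acknowledge at the end. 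You exhibit no single $\forall^{2^\omega}$ condition that circumvents this, and your alternative reading, ``delete offending edges in order of increasing rank,'' does not define a legitimate transfinite recursion either, since whether an edge of rank $\xi$ survives depends on edges of arbitrary, in particular higher, rank. So the proposal as written has a genuine gap precisely at the point it identifies as the crux.

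The paper resolves this not with a cleverer one-step condition but by iterating the one-step pruning $\omega$ times, and this is where the $s$-$n$-$m$ theorem does real work (not merely ``gathering'' the result). With $\mathcal V$ part of a good universal system and $\mathcal V^*$ the pruned set, one obtains a \emph{recursive} $f$ with $\mathcal V^*_\alpha = \mathcal V_{f(\alpha)}$; starting from a recursive code $\alpha_0$ for a universal set with the diagonal added, one sets $\alpha_{n+1} = f(\alpha_n)$ and $\mathcal A = \bigcap_n \mathcal V_{\alpha_n}$, which is lightface $\Pi^1_{2n+1}$ because $n \mapsto \alpha_n$ is recursive. Transitivity of each section $\mathcal A_\tau$ then comes out stage by stage: if $(x,y),(y,z) \in \mathcal A_\tau$, then for each fixed $n$ both pairs lie in $\mathcal V^*_{\alpha_n,\tau}$, and comparing their $\varphi$-ranks forces $(x,z) \in \mathcal V_{\alpha_n,\tau}$; since $n$ was arbitrary, $(x,z) \in \mathcal A_\tau$. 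No finite stage is transitive in general---new violations genuinely appear after each pruning---so it is the countable intersection along a recursive sequence of codes, not a fixed point of a single $\forall^{2^\omega}$ condition, that carries the proof; genuine equivalence relations are untouched at every stage, so universality is preserved, exactly as in your surjectivity remark. Replacing your ``single-condition repair'' by this iteration-and-intersection turns your outline into the paper's argument.
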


As in Hjorth \cite{HJORTH.universal.co-analytic}, the existence of complete $\mathbf \Pi^1_{2n+1}$ equivalence relation follows immediately from Theorem~\ref{T4}.  Indeed, if $\mathcal E$ is a universal set for $\mathbf \Pi^1_{2n+1}$ equivalence relations on $2^\omega$, one may define a complete $\mathbf \Pi^1_{2n+1}$ equivalence relation, $F$, on $2^\omega \times 2^\omega \approx 2^\omega$ by 
\[
(\sigma , x) F (\tau , y) \iff \tau = \sigma \ \& \ (x,y) \in \mathcal E_\tau.
\]
If $E$ is any $\mathbf \Pi^1_{2n+1}$ equivalence relation on $2^\omega$, with $E = \mathcal E_\tau$, the (continuous) map $x \mapsto (\tau , x)$ reduces $E$ to $F$.



\section{Preliminaries and notation}\label{S2}

Our principle references are Kechris~\cite{KECHRIS.dst} and Moschovakis~\cite{MOSCHOVAKIS.dst}.  Our notation is largely the same as theirs.  We review some key facts and terminology below.

For sets $X$, $Y$, and $A \subseteq X \times Y$, if $x \in X$, let $A_x$ denote the vertical cross-section, $\{ y \in Y : (x,y) \in A\}$, of $A$.  As mentioned above, if $f : X \rightarrow Y$ is any function and $x \subseteq X$, we let $f[x] = \{ f(a) : a \in x\}$ and, if $y \subseteq Y$, we let $f^{-1} [y] = \{ a : f(a) \in y\}$.  We will freely identify $\mathcal P( \omega)$ with the Cantor space $2^\omega$.  For example, we regard $\emptyset$ and $\omega$ as the elements $\bar 0$ and $\bar 1$ in the Cantor space.

\subsection{Classical notions}

If $\Gamma$ is a pointclass, we say that $\Gamma$ is {\em $2^\omega$-parameterized} iff, for each Polish space $X$, there is a $\Gamma$-set $\mathcal U \subseteq 2^\omega \times X$ such that, for each $A \subseteq X$ with $A \in \Gamma$, there exists $\tau \in 2^\omega$ with $A = \mathcal U_\tau$.  Such a set $\mathcal U$ is called a {\em universal $\Gamma$-set for $X$}.  Each of the projective classes $\mathbf \Sigma^1_n$ and $\mathbf \Pi^1_n$ is $2^\omega$-parameterized (see Theorem~37.7 in Kechris~\cite{KECHRIS.dst}).

Suppose $\Gamma$ is a pointclass, $\Gamma$ is {\em ranked} iff, for each Polish space $X$ and each $A \subseteq X$, with $A \in \Gamma$, there is a {\em $\Gamma$-rank} $\varphi : A \rightarrow {\rm ORD}$.  That is, there are relations $S , P \subseteq X^2$ in $\Gamma$ and $\bar{\Gamma}$, respectively, such that, for each $y \in A$, 
\[
x \in A \ \& \ \varphi (x) \leq \varphi (y) \iff S(x,y) \iff P(x,y),
\]
for each $x \in X$.  (Here $\bar{\Gamma}$ denotes the class of complements of $\Gamma$-sets.)  In other words, the predicate ``$x \in A \ \& \ \varphi (x) \leq \varphi (y)$'' is uniformly in $\Gamma \cap \bar{\Gamma}$, provided that $y \in A$.  See \S34 in Kechris~\cite{KECHRIS.dst} for an in-depth treatment of $\Gamma$-ranks.  It is a fundamental result that $\mathbf \Pi^1_1$ is a ranked pointclass.  Assuming PD, the classes $\mathbf \Pi^1_{2n+1}$ are also ranked.  See Theorems~34.4 and 39.2 in Kechris~\cite{KECHRIS.dst} for proofs of these facts.  The notation ``$x \leq_{\varphi} y$'' is a shorthand to indicate ``$\varphi (x) \leq \varphi (y)$'', when $\varphi$ is a $\Gamma$-rank on a set $A$ and $x,y \in A$.

\subsection{Effective theory}

For each classical, or ``boldface'', pointclass $\mathbf \Gamma$ (e.g., $\mathbf \Sigma^1_1$, $\mathbf \Pi^1_1$, etc.)~on a recursively presented Polish space, we let $\Gamma$ denote its effective, or ``lightface'', counterpart, (e.g., $\Sigma^1_1$, $\Pi^1_1$, etc.).  The effective pointclasses share many of the properties of their classical brethren.  For instance, the pointclass $\Pi^1_1$ is also ranked.  (See Theorem~4B.2 in Moschovakis \cite{MOSCHOVAKIS.dst}.)  For a $\Pi^1_1$-rank, $\varphi$, the predicates $S$ and $P$ as above will, in fact, be in the (lightface) classes $\Sigma^1_1$ and $\Pi^1_1$, respectively.  As in the classical case, PD guarantees the lightface classes $\Pi^1_{2n+1}$ are ranked as well.  (See Theorem~6B.1 in Moschovakis~\cite{MOSCHOVAKIS.dst}.)

The other tools we require from the effective theory are the existence of {\em good universal systems} and the resulting $s$-$n$-$m$ Theorem.  We say that a Polish space, $X$, is a {\em product space} if $X$ is a product of at most countably many copies of $2^\omega$.  (This is a restricted definition compared to that in Moschovakis~\cite{MOSCHOVAKIS.dst}.)

\begin{definition}
A family of sets $\{ \mathcal U^X \subseteq 2^\omega \times X : X \mbox{ is a product space}\}$ is a {\em good universal system} for a (boldface) pointclass $\mathbf \Gamma$ iff the following hold:
\begin{enumerate}
\item Each $\mathcal U^X$ is a (lightface) $\Gamma$-set;
\item For each product space $X$, the set $\mathcal U^X$ is universal for $\mathbf \Gamma$-subsets of $X$, i.e., the cross-sections $\mathcal U^X_\tau$ are exactly the $\mathbf \Gamma$-subsets of $X$;
\item If $A \subseteq X$ is a (lightface) $\Gamma$-set, there is a recursive $\varepsilon \in 2^\omega$ such that $A = \mathcal U^X_\varepsilon$;
\item\label{snm} For each pair, $X_1 , X_2$, of product spaces, there is a recursive function $s: 2^\omega \times X_1 \rightarrow 2^\omega$ such that, for each $\alpha \in 2^\omega$ and $x \in X_1$, 
\[
(\alpha , x , y) \in \mathcal U^{X_1 \times X_2} \iff (s(\alpha , x) , y) \in \mathcal U^{X_2}.
\] 
\end{enumerate}
\end{definition}

Property~\ref{snm} is also known as the $s$-$n$-$m$ Theorem.  

It follows from Theorem~3H.1 in Moschovakis~\cite{MOSCHOVAKIS.dst}, that each of the projective classes $\mathbf \Sigma^1_n$ and $\mathbf \Pi^1_n$ has a good universal system.  For illustrative purposes, we now describe a typical application of good universal systems.

\begin{example}
Let $\mathbf \Gamma$ be a pointclass, closed under finite intersections, with an associated good universal system $\mathcal U^X$.  There is a recursive function $f : (2^\omega)^2 \rightarrow 2^\omega$ such that, for $\sigma , \tau \in 2^\omega$, we have $\mathcal U^{2^\omega}_{f(\sigma , \tau)} = \mathcal U^{2^\omega}_\sigma \cap \mathcal U^{2^\omega}_\tau$.

To find such an $f$, let $s : (2^\omega)^3 \rightarrow 2^\omega$ be the recursive function from property~\ref{snm} above, for the product spaces $X_1 = 2^\omega \times 2^\omega$ and $X_2 = 2^\omega$, and let $\varepsilon \in 2^\omega$ be recursive such that, for $\sigma, \tau , x \in 2^\omega$, 
\begin{align*}
(\sigma , x) \in \mathcal U^{2^\omega} \ \& \ (\tau  , x) \in \mathcal U^{2^\omega} 
&\iff (\varepsilon , \sigma , \tau , x) = \mathcal U^{2^\omega \times 2^\omega \times 2^\omega}\\
&\iff (s(\varepsilon , \sigma , \tau) , x) \in \mathcal U^{2^\omega}
\end{align*}
Define $f(\sigma , \tau) = s (\varepsilon, \sigma, \tau)$.  This is the desired recursive function.
\end{example}



\section{Projective ideals on $\omega$}\label{S3}

\subsection{$\mathbf \Sigma^1_n$ ideals}

We now restate and prove Theorem~\ref{T1} above.

\begin{refthmT1}
For each $n \in \omega$, there is a wRK-complete $\mathbf \Sigma^1_n$ ideal on $\omega$.
\end{refthmT1}

\begin{proof}
Fix $n \in \omega$.  The proof is the same for each $n$.  First of all, let $\{ A_p : p \in 2^{<\omega}\}$ be a partition of $\omega$ into infinite sets and let $h_p : \omega \rightarrow A_p$ be fixed bijections.  Let $\mathcal U  \subseteq 2^\omega \times 2^\omega$ be a universal $\mathbf \Sigma^1_n$ set for $2^\omega$.  Define a $\mathbf \Sigma^1_n$ set $F \subseteq 2^\omega$ by letting $x \in F$ iff there exists $\tau \in 2^\omega$ such that 
\begin{enumerate}
\item\label{it1:1} $(\forall p \in 2^{<\omega}) (p \not \subset \tau \implies x \cap A_p = \emptyset)$,
\item\label{it1:2} $h_\emptyset^{-1} [x] \in \mathcal U_\tau$ and
\item\label{it1:3} $(\forall p \in 2^{<\omega})(p \subset \tau \implies h_p^{-1} [x] = h_\emptyset^{-1} [x])$.
\end{enumerate}
Let $\J$ be the ideal generated by $F$.  Since the class $\mathbf \Sigma^1_n$ is closed under continuous images and countable unions, it follows that $\J$ is also $\mathbf \Sigma^1_n$. 

To see that $\J$ is a wRK-complete $\mathbf \Sigma^1_n$ ideal, suppose that $\I$ is an arbitrary $\mathbf \Sigma^1_n$ ideal and $\tau \in 2^\omega$ is such that $\I = \mathcal U_\tau$.  Define a map $f : \bigcup_{p \subset \tau} A_p \rightarrow \omega$ by $f \upto A_p = h_p^{-1}$.  This map is well-defined because the $A_p$ are pairwise disjoint.

Suppose first that $x \in \I$.  To see that $y = f^{-1} [x] \in \J$, it will suffice to verify conditions \ref{it1:1}-\ref{it1:3} above and conclude that $y \in F$.  That $y$ satisfies condition \ref{it1:1} follows from the observation that, for each $p \not\subset \tau$, since $\dom (f) \cap A_p = \emptyset$, we have $y \cap A_p = \emptyset$.  Also, for each $p \subset \tau$, we have $y \cap A_p = h_p [x]$ and therefore
\[
h_\emptyset^{-1} [y] = h_\emptyset^{-1} [y \cap A_\emptyset] = h_\emptyset^{-1} [h_\emptyset[x]] = x \in \I = \mathcal U_\tau.
\]
This shows that $y$ satisfies condition \ref{it1:2}.  Finally, the definition of $f$ guarantees that $y$ satisfies condition \ref{it1:3}.

Suppose now that $y = f^{-1} [x] \in \J$.  We wish to see that $x \in \I$.  Let $y_0 , \ldots , y_k \in F$ be such that $y \subseteq y_0 \cup \ldots \cup y_k$.  Let $\tau_0 , \ldots , \tau_k \in 2^\omega$ be as in the definition of $F$, such that each $\tau_i$ witnesses the membership of $y_i$ in $F$.  Let $m \in \omega$ be large enough that $\tau_i \upto m \neq \tau \upto m$, for each $\tau_i \neq \tau$.  Now define $x_0 , \ldots , x_k$ by letting $x_i = h_{\tau \upto m}^{-1} [y_i]$, for each $i \leq k$.  Note that $x_i = \emptyset$, for $i$ with $\tau_i \neq \tau$, and that $x_i \in \mathcal U_\tau = \I$, for each $i \leq k$.  Finally, observe that 
\[
x = h_{\tau \upto m}^{-1} [y] \subseteq \bigcup_{i \leq k} h_{\tau\upto m}^{-1} [y_i] = x_0 \cup \ldots \cup x_k.
\]
As $\I$ is an ideal, we conclude that $x \in \I$.

We have shown that $f$ is a weak Rudin-Keisler reduction of $\I$ to $\J$.  This concludes the proof.
\end{proof}

\subsection{Co-analytic ideals}

We now prove our main result for co-analytic ideals on $\omega$.

\begin{refthmT2}
There is a wRK-complete $\mathbf \Pi^1_1$ ideal on $\omega$.
\end{refthmT2}

The key lemma in the proof of Theorem~\ref{T2} is a parameterization of the $\mathbf \Pi^1_1$ ideals on $\omega$.

\begin{lemma}\label{L1}
There is a universal set for $\mathbf \Pi^1_1$ ideals on $\omega$, i.e., there is a $\mathbf \Pi^1_1$ set $\mathcal A \subseteq 2^\omega \times 2^\omega$ such that the cross-sections $\mathcal A_\tau$ are exactly the $\mathbf \Pi^1_1$ ideals on $\omega$.
\end{lemma}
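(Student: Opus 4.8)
The plan is to start from a universal $\mathbf \Pi^1_1$ set $\mathcal U\subseteq 2^\omega\times 2^\omega$ (which exists because $\mathbf \Pi^1_1$ is $2^\omega$-parameterized) and to surgically modify it into a set $\mathcal A$ all of whose sections are ideals, while leaving untouched those sections that already happen to be ideals. Since every $\mathbf \Pi^1_1$ ideal occurs as some $\mathcal U_\tau$, this second feature will guarantee that the sections of $\mathcal A$ exhaust the $\mathbf \Pi^1_1$ ideals, and the first will guarantee they contain nothing else. As an ideal is precisely a downward-closed, finite-union-closed family, I would enforce these two closure properties one at a time, the whole difficulty being to stay inside $\mathbf \Pi^1_1$.

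First I would force downward closure by passing to the ``interior'' $D_\tau=\{x : \forall z\,(z\subseteq x\to z\in\mathcal U_\tau)\}$. Since ``$z\subseteq x$'' is clopen and $\mathbf \Pi^1_1$ is closed under universal real quantification, the set $\{(\tau,x):x\in D_\tau\}$ is again $\mathbf \Pi^1_1$; moreover $D_\tau$ is downward closed, satisfies $D_\tau\subseteq\mathcal U_\tau$, and equals $\mathcal U_\tau$ whenever $\mathcal U_\tau$ is downward closed (in particular whenever $\mathcal U_\tau$ is an ideal). Hence every $\mathbf \Pi^1_1$ ideal is still captured as some $D_\tau$.

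Next I would force closure under finite unions by passing to the set of ``good'' elements, $\mathcal A_\tau=\{x\in D_\tau : \forall y\,(y\in D_\tau\to x\cup y\in D_\tau)\}$. Using only that $D_\tau$ is downward closed, a short computation shows $\mathcal A_\tau$ is always an ideal: if $x,x'$ are good then $x\cup x'\in D_\tau$ and, for any $y\in D_\tau$, $(x\cup x')\cup y=x\cup(x'\cup y)\in D_\tau$, so $x\cup x'$ is good; and if $x$ is good with $w\subseteq x$, then $w\in D_\tau$ and $w\cup y\subseteq x\cup y\in D_\tau$ forces $w\cup y\in D_\tau$, so $w$ is good. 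If $D_\tau$ is itself an ideal, every element of $D_\tau$ is good, whence $\mathcal A_\tau=D_\tau$; so the sections of $\mathcal A$ are exactly the $\mathbf \Pi^1_1$ ideals. It is convenient to record the equivalent form ``$x$ good $\iff\forall u\,(u\setminus x\in D_\tau\to u\in\mathcal U_\tau)$'', obtained by noting that a failure of goodness witnessed by $y\in D_\tau$ with $x\cup y\notin D_\tau$ produces some $u\subseteq x\cup y$ with $u\notin\mathcal U_\tau$, for which $u\setminus x\subseteq y$ already lies in $D_\tau$, and conversely.

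The hard part will be the complexity bookkeeping in this last step. As written, goodness has the shape $\forall y\,(\mathbf \Pi^1_1\to\mathbf \Pi^1_1)$, which is only $\mathbf \Pi^1_2$; and indeed deciding whether an arbitrary $\mathbf \Pi^1_1$ set is an ideal is genuinely $\mathbf \Pi^1_2$, so the entire content of the lemma is to show that goodness is nonetheless $\mathbf \Pi^1_1$. This is exactly where the prewellordering property is indispensable. Fixing a $\mathbf \Pi^1_1$-norm $\varphi$ on $\{(\tau,v):v\in D_\tau\}$, the plan is to rewrite ``$x$ is not good'' as the assertion that a certain subset of $D_\tau$, carved out by a $\mathbf \Sigma^1_1$ condition, is nonempty, and then to use the comparison relations $\le_\varphi$ together with the boundedness theorem to replace the offending unbounded quantifier over $D_\tau$ by one bounded in the norm (hence over a Borel initial segment), thereby exhibiting goodness as a $\mathbf \Pi^1_1$ predicate in $(\tau,x)$. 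Verifying that this bounded reformulation has the same truth value is the delicate point on which everything turns. Since the argument invokes nothing about $\mathbf \Pi^1_1$ beyond $2^\omega$-parameterization, prewellordering, and closure under universal real quantification and under countable unions and intersections, the identical construction will yield the $\mathbf \Pi^1_{2n+1}$ version under PD.
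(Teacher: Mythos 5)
Your set-theoretic skeleton is sound and its easy verifications are correct: the ``interior'' $D_\tau$ is $\mathbf\Pi^1_1$ and downward closed, the good elements always form an ideal, $\mathcal A_\tau=D_\tau=\mathcal U_\tau$ whenever $\mathcal U_\tau$ is an ideal, and your reformulation of goodness is right. But all of this is the easy part, and the proposal stops being a proof exactly at the step you yourself flag as delicate: showing that goodness is $\mathbf\Pi^1_1$. The plan sketched there does not work. The witness set to non-goodness, $\{y : y\in D_\tau \ \& \ x\cup y\notin D_\tau\}$, is the intersection of a $\mathbf\Pi^1_1$ set with a $\mathbf\Sigma^1_1$ set, not a $\mathbf\Sigma^1_1$ subset of $D_\tau$, so the boundedness theorem gives you no ordinal bound on the norms of its members; and the only bound available definably from $(\tau,x)$ is $\varphi(\tau,x)$ itself. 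Bounding the quantifier that way yields the condition $\forall y\,\bigl(y\in D_\tau \ \& \ \varphi(\tau,y)\le\varphi(\tau,x) \implies x\cup y\in D_\tau\bigr)$, which is indeed $\mathbf\Pi^1_1$ (the hypothesis becomes $\mathbf\Sigma^1_1$ via the rank relations, given $x\in D_\tau$) but is \emph{strictly weaker} than goodness: an $x$ may absorb every lower-ranked member of $D_\tau$ while failing to absorb some higher-ranked one, so the bounded reformulation does not have the same truth value and its sections need not be ideals. There is no single-shot repair along these lines; this is precisely the obstruction the paper's proof is built around.

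The paper's resolution is to embrace the weaker rank-bounded condition and \emph{iterate} it: it defines $\mathcal A^{(0)}\supseteq\mathcal A^{(1)}\supseteq\cdots$, where $\mathcal A^{(n+1)}$ prunes from $\mathcal A^{(n)}$ (using a fresh $\mathbf\Pi^1_1$-rank $\varphi_n$ on $\mathcal A^{(n)}$ at each stage) those $x$ failing either downward closure into $\mathcal A^{(n)}_\tau$ or union-closure with respect to $\varphi_n$-lower-ranked elements, and sets $\mathcal A=\bigcap_n\mathcal A^{(n)}$. No finite stage is an ideal in general, but for $x,y\in\mathcal A_\tau$ and each $n$, one of $(\tau,x)\le_{\varphi_n}(\tau,y)$ or $(\tau,y)\le_{\varphi_n}(\tau,x)$ holds, so $x\cup y\in\mathcal A^{(n)}_\tau$ for every $n$; union-closure is thus recovered only in the limit, while sections that are already ideals are never pruned. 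Closure of $\mathbf\Pi^1_1$ under countable intersections keeps $\mathcal A$ in the class. To repair your argument you would have to replace the single boundedness application by this $\omega$-fold iteration (your two-step decomposition into $D$ and goodness can be kept, but goodness must be approximated by the rank-bounded conditions stage by stage), or else import genuinely effective input as Hjorth did; as written, the proposal has a real gap at its crux.
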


\begin{proof}
We proceed inductively to define $\mathbf \Pi^1_1$ sets $\mathcal A^{(0)} \supseteq \mathcal A^{(1)} \supseteq \ldots$ such that $\mathcal A = \bigcap_n \mathcal A^{(n)}$ is a universal set for $\mathbf \Pi^1_1$ ideals on $\omega$.  Along the way, we will also select $\mathbf \Pi^1_1$-ranks $\varphi_n : \mathcal A^{(n)} \rightarrow \omega_1$.  

Let $\mathcal U \subseteq 2^\omega \times 2^\omega$ be a universal $\mathbf \Pi^1_1$ set.  As the base case of our induction, let $\mathcal A^{(0)} = \mathcal U \cup (2^\omega \times \{ \emptyset \})$.  Given the $\mathbf \Pi^1_1$ set $\mathcal A^{(n)}$, we describe how to define $\mathcal A^{(n+1)}$.  Let $\varphi_n : \mathcal A^{(n)} \rightarrow \omega_1$ be a $\mathbf \Pi^1_1$-rank and take $\mathcal A^{(n+1)} \subseteq 2^\omega \times 2^\omega$ to be the set of all $(\tau , x)$ such that either $x = \emptyset$ or the following hold:
\begin{enumerate}
\item\label{it2:1} $x \in \mathcal A^{(n)}_\tau$,
\item\label{it2:2} $(\forall y) (y \subseteq x \implies y \in \mathcal A^{(n)}_\tau)$ and
\item\label{it2:3} $(\forall y) ((y \in \mathcal A^{(n)}_\tau \ \& \ (\tau , y) \leq_{\varphi_n} (\tau , x)) \implies x \cup y \in \mathcal A^{(n)})$.
\end{enumerate}

In essence, we are defining $\mathcal A^{(n+1)}_\tau$ by removing from $\mathcal A^{(n)}_\tau$ those $x$ for which there exist lower ranked witnesses to the failure of closure under finite unions.  It follows from the definability properties of $\mathbf \Pi^1_1$-ranks that $\mathcal A^{(n+1)}$ is $\mathbf \Pi^1_1$.

Let $\mathcal A = \bigcap_n \mathcal A^{(n)}$.  Since each $\mathcal A^{(n)}$ is $\mathbf \Pi^1_1$, so is $\mathcal A$.  First observe that, if $\mathcal U_\tau$ is already an ideal, then $\mathcal A^{(n)}_\tau = \mathcal U_\tau$, for each $n$, since the process of producing $\mathcal A^{(n+1)}$ from $\mathcal A^{(n)}$ removes no elements from $\mathcal A^{(n)}_\tau$ in this case.  It follows that each $\mathbf \Pi^1_1$ ideal on $\omega$ appears as some cross-section $\mathcal A_\tau$.  

It now only remains to see that every cross-section $\mathcal A_\tau$ is an ideal on $\omega$.  Indeed, suppose that $x,y \in \mathcal A_\tau$.  Fix $n \in \omega$.  We have that $x,y \in \mathcal A^{(n+1)}_\tau$ and we may assume that $(\tau , x) \leq_{\varphi_n} (\tau , y)$.  Condition \ref{it2:3} in the definition of $\mathcal A^{(n+1)}$ thus implies that $x \cup y \in \mathcal A^{(n)}_\tau$.  Similarly, the membership of $x$ in $\mathcal A^{(n+1)}$ implies that every subset of $x$ is a member of $\mathcal A^{(n)}_\tau$, by condition \ref{it2:2}.  As $n$ was arbitrary, it follows that $x \cup y \in \mathcal A_\tau$ and that every subset of $x$ is a member of $\mathcal A_\tau$.  In short, $\mathcal A_\tau$ is an ideal.
\end{proof}

Before proceeding with the proof of Theorem~\ref{T2}, we remark that an argument analogous to the one used to produce a complete $\mathbf \Pi^1_1$ equivalence relation from a universal set for $\mathbf \Pi^1_1$ equivalence relations does not work for ideals, as the object one would obtain is not itself be an ideal.  We thus require a means of ``coding'' all $\mathbf \Pi^1_1$ ideals into a single $\mathbf \Pi^1_1$ ideal.

\begin{proof}[Proof of Theorem~\ref{T2}]
Let $\{ A_p : p \in 2^{<\omega} \}$ be a partition of $\omega$ into infinite sets and let $h_p : \omega \rightarrow A_p$ be fixed bijections.  Take $\mathcal A \subseteq 2^\omega \times 2^\omega$ to be a universal set for $\mathbf \Pi^1_1$ ideals, the existence of which is guaranteed by Lemma~\ref{L1}.  Define
\[
\J = \{ x \in 2^\omega : (\forall \tau \in 2^\omega) (\forall^\infty p \subset \tau) (h_p^{-1} [x] \in \mathcal A_\tau) \}.
\]

In the first place, we wish to see that $\J$ is an ideal.  That $\J$ is downward closed follows since each $\mathcal A_\tau$ is downward closed and preimages preserve containment.  To verify closure under unions, fix $x, y \in \J$ and $\tau \in 2^\omega$, and let $p_0 \subset \tau$ be such that $h_p^{-1} [x] , h_p^{-1} [y] \in \mathcal A_\tau$, for each $p \subset \tau$, with $p \supseteq p_0$.  For each $p \subset \tau$, if $p \supseteq p_0$, we have
\[
h_p^{-1} [x \cup y] = h_p^{-1} [x] \cup h_p^{-1} [y] \in \mathcal A_\tau,
\]
since $\mathcal A_\tau$ is an ideal.  In other words, $h_p^{-1} [x \cup y] \in \mathcal A_\tau$, for all but finitely many $p \subset \tau$.  Since $\tau$ was arbitrary, we conclude that $x \cup y \in \J$.

To see that $\J$ is a wRK-complete $\mathbf \Pi^1_1$ ideal, suppose that $\I = \mathcal A_\tau$ is a fixed $\mathbf \Pi^1_1$ ideal on $\omega$.  Define a map $f : \bigcup_{p \subset \tau} A_p \rightarrow \omega$ by $f \upto A_p = h_p^{-1}$.  As before, $f$ is well-defined by the disjointness of the $A_p$.  Also, note that $f^{-1} [x] \cap A_p = h_p [x]$, for each $p \subset \tau$.  Thus,
\[
h_p^{-1} [f^{-1} [x]] = h_p^{-1} [f^{-1} [x] \cap A_p] = h_p^{-1} [h_p [x]]  = x
\]
and hence $x \in \I$ iff $h_p^{-1} [f^{-1}[x]] \in \mathcal A_\tau$, for each $p \subset \tau$, since $\I = \mathcal A_\tau$.  This, combined with the observation that $f^{-1} [x] \cap A_p = \emptyset$, for each $p \not \subset \tau$, shows that $x \in \I$ iff $f^{-1} [x] \in \J$.
\end{proof}

\subsection{Free abelian subgroups}

As an aside, we show how the method of the proof of Theorem~\ref{T1} may be used to produce free abelian subgroups in a large class of Polish groups.  Moreover, the free subgroups we describe have $\mathfrak c$-many definable homomorphisms into $\mathbb Z$.  Specifically, we have the following result.

\begin{theorem}\label{T6}
Let $G$ be a Polish group with an element of infinite order.  The countable power $G^\omega$ contains a $K_\sigma$ subgroup, $H$, which is free abelian on $\mathfrak c$-many generators and such that there exist $\mathfrak c$-many Borel-measurable homomorphisms $\varphi : H \rightarrow \mathbb Z$.
\end{theorem}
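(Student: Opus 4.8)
The plan is to build $H$ inside a copy of $\langle g\rangle^{2^{<\omega}}$, reusing the tree-of-branches skeleton that drives the proof of Theorem~\ref{T1}. Fix an element $g \in G$ of infinite order, so that $\langle g\rangle \cong \mathbb Z$, and identify $\omega$ with the countable index set $2^{<\omega}$, so that $G^\omega \cong G^{2^{<\omega}}$. For each $\tau \in 2^\omega$ I would define a generator $x_\tau \in G^{2^{<\omega}}$ by
\[
x_\tau(p) = \begin{cases} g & \text{if } p \subset \tau,\\ e & \text{otherwise,}\end{cases}
\]
where $e$ is the identity of $G$; thus $x_\tau$ carries a copy of $g$ exactly along the branch determined by $\tau$. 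Let $H$ be the subgroup of $G^{2^{<\omega}}$ generated by $\{x_\tau : \tau \in 2^\omega\}$. Since in each coordinate every $x_\tau$ takes a value in the abelian subgroup $\langle g\rangle$, all of $H$ lies in $\langle g\rangle^{2^{<\omega}}$ and is therefore abelian.

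Freeness and the homomorphisms are where the branch combinatorics of Theorem~\ref{T1} reappear. To see that the $x_\tau$ freely generate $H$, suppose $\prod_i x_{\tau_i}^{n_i} = e$ with the $\tau_i$ distinct; reading off coordinate $p$ gives $g^{\sum_{i : p \subset \tau_i} n_i} = e$, hence $\sum_{i : p \subset \tau_i} n_i = 0$ since $g$ has infinite order. Fixing $i_0$ and choosing $m$ large enough that $\tau_{i_0}\upto m \neq \tau_i \upto m$ for every $i \neq i_0$ (exactly the separation step of Theorem~\ref{T1}), the coordinate $p = \tau_{i_0}\upto m$ lies on the branch of $\tau_{i_0}$ and of no other $\tau_i$, forcing $n_{i_0} = 0$. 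Thus $H$ is free abelian on the $\mathfrak c$-many distinct generators $x_\tau$. For the homomorphisms, let $\pi_p : H \to \mathbb Z$ send $h$ to the unique $m$ with $h(p) = g^m$; each $\pi_p$ is a homomorphism and is Borel, since the preimage of any $m$ is $\{h \in H : h(p) = g^m\}$, a relatively closed subset of $H$. For $\tau \in 2^\omega$ I would then set $\varphi_\tau(h) = \lim_n \pi_{\tau\upto n}(h)$; writing $h = \prod_i x_{\tau_i}^{n_i}$ one checks that this limit is eventually constant and equals the coefficient of $x_\tau$, so $\varphi_\tau(x_{\tau'}) = \delta_{\tau\tau'}$. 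These are $\mathfrak c$-many distinct homomorphisms $H \to \mathbb Z$, each Borel because a pointwise limit of Borel maps is Borel.

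The step I expect to be the real obstacle is verifying that $H$ is $K_\sigma$: a typical element $\prod_i x_{\tau_i}^{n_i}$ has infinite support, being nonzero on the whole of each active branch, so one cannot argue via finitely supported elements, and $H$ need not be closed. The device I would use is to parametrize bounded finite combinations by a compact space. For each $k$ and $M$, consider the map $\Phi_{k,M} : (2^\omega)^k \times \{-M,\dots,M\}^k \to G^{2^{<\omega}}$ sending $(\tau_1,\dots,\tau_k,n_1,\dots,n_k)$ to $\prod_{i=1}^k x_{\tau_i}^{n_i}$, whose value at coordinate $p$ is $g^{\sum_{i : p \subset \tau_i} n_i}$. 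Each coordinate of $\Phi_{k,M}$ is continuous, since membership $p \subset \tau_i$ is a clopen condition and the resulting exponent ranges over a finite set, so $\Phi_{k,M}$ is continuous on the compact domain $(2^\omega)^k \times \{-M,\dots,M\}^k$ and its image is compact. As every element of $H$ lies in some $\mathrm{Im}(\Phi_{k,M})$ (take $k$ the number of generators used and $M$ the largest coefficient), we obtain $H = \bigcup_{k,M} \mathrm{Im}(\Phi_{k,M})$, exhibiting $H$ as a countable union of compact sets. The crucial point making this work is the compactness of the branch space $2^\omega$, which lets the unboundedly-supported generators be swept out by compact families.
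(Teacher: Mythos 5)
Your construction is exactly the paper's: the same branch generators $x_\tau$ (equal to $g$ on coordinates indexed by initial segments of $\tau$, to $e$ elsewhere, under a bijection $\omega \approx 2^{<\omega}$), the same branch-separation argument for freeness, and the same homomorphisms determined by $\varphi_\tau(x_{\tau'}) = \delta_{\tau\tau'}$. The step you anticipated as the obstacle is not one: your compact parametrizations $\Phi_{k,M}$ are a correct, explicit unwinding of the paper's one-line observation that the generating set $S = \{x_\tau : \tau \in 2^\omega\}$ is itself compact (it is the projection of a compact subset of $2^\omega \times \{e,g\}^\omega$), so $H$ is compactly generated and hence $K_\sigma$. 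The one place you genuinely diverge is the Borel-measurability of the $\varphi_\tau$. The paper extends $\varphi_\tau$ from the free basis and then shows each preimage $\varphi_\tau^{-1}[\{n\}]$ is analytic, concluding via the fact that every $\mathbf \Sigma^1_1$-measurable map is Borel-measurable (in effect, Souslin's theorem applied to a countable analytic partition of $H$). Your alternative realizes $\varphi_\tau$ as the pointwise limit of the coordinate homomorphisms $\pi_{\tau \upto n}$, which are continuous on $H$; the limit exists on all of $H$ because for $h = \prod_i x_{\tau_i}^{n_i}$ one has $\pi_{\tau\upto n}(h) = \sum_{i :\, \tau\upto n \subset \tau_i} n_i$, which stabilizes as soon as $n$ separates $\tau$ from the finitely many $\tau_i \neq \tau$. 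This is cleaner: Borelness of the limit is immediate, and as a bonus the limit formula defines $\varphi_\tau$ on all of $H$ at once, so well-definedness does not depend on first invoking freeness to extend from the basis. Both verifications are correct; yours trades the descriptive-set-theoretic measurability step for an elementary stabilization argument, at the cost of nothing.
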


It is a theorem of N\"obeling \cite{NOBELING} that the group, $\mathcal B$, of bounded sequences in $\mathbb Z^\omega$ is free.  This result, however, requires an essesntial use of the axiom of choice.  Indeed, Andreas Blass \cite{BLASS.nobelings.group} has shown (under ZFC) that there are only countably many Borel-measurable homomorphisms $\varphi : \mathcal B \rightarrow \mathbb Z$.  (In this case, Borel-measurability is equivalent to the statement that $\varphi^{-1}[\{n\}]$ is Borel for each $n \in \omega$.)  Blass further showed that (under AD) there are only countably many homomorphisms $\varphi : \mathcal B \rightarrow \mathbb Z$ (Borel-measurable or otherwise).  On the other hand, for any free abelian group $G$, there must be $\mathfrak c$ many homomorphisms $\varphi : \mathcal B \rightarrow \mathbb Z$.  Thus, Blass' result shows that, under AD, $\mathcal B$ is not free.  

The subgroup we construct below, however, has a free basis of a sufficiently explicit nature that there are $\mathfrak c$-many definable homomorphisms into $\mathbb Z$.

\begin{proof}[Proof of Theorem~\ref{T6}]
Fix a Polish group $G$, with identity element $e$ and $g \in G$ such that the powers of $x$ are all distinct.  Let $p \mapsto n_p$ be a bijection of $2^{<\omega}$ onto $\omega$.  Define a set $S \subseteq G^\omega$ by letting $x \in S$ iff there exists $\tau \in 2^\omega$ such that
\begin{enumerate}
\item\label{it3:1} $(\forall p \in 2^{<\omega}) (p \not \subset \tau \implies x (n_p) = e)$ and
\item\label{it3:2} $(\forall p \in 2^{<\omega}) (p \subset \tau \implies x(n_p) = g)$.
\end{enumerate}
For $\tau \in 2^\omega$, let $x_\tau$ denote the unique element of $S$ whose membership in $S$ is witnessed by $\tau$.  Take $H$ to be the subgroup generated by $S$.  

In the first place, observe that $S$ is the projection onto the second coordinate of the compact set
\[
\{ (\tau , x) \in 2^\omega \times \{e,g\}^\omega : (\forall p \subset \tau) (x(n_p) = g) \wedge (\forall p \not\subset \tau)(x(n_p) = e)\}
\] 
and is itself compact. Hence, $H$ is compactly generated and therefore $K_\sigma$. 

To see that $H$ is a free abelian group, it suffices to show that there are no relations among the elements of $S$, besides those dictated by commutativity.  Indeed, suppose that $x_{\sigma_0} , \ldots , x_{\sigma_m} , x_{\tau_0} , \ldots , x_{\tau_n} \in S$, with $\{ \sigma_0 , \ldots , \sigma_m\}$ and $\{\tau_0 , \ldots , \tau_n \}$ sets of pairwise distinct reals, and $i_0 , \ldots , i_m , j_0 , \ldots , j_n \in \mathbb Z$ (all nonzero) are such that 
\[
y = x_{\sigma_0}^{i_0} \cdot \ldots \cdot x_{\sigma_m}^{i_m} = x_{\tau_0}^{j_0} \cdot \ldots \cdot x_{\tau_n}^{j_m} = z.
\]

Our first claim is that $\{ \sigma_0 , \ldots , \sigma_m\} = \{\tau_0 , \ldots , \tau_n\}$.  Suppose that this was not the case.  For instance, if $\sigma_0 \notin \{ \tau_0 , \ldots , \tau_n\}$, then we may choose $k \in \omega$ large enough that 
\[
\sigma_0 \upto k , \tau_0 \upto k , \ldots , \tau_n \upto k
\]
are all distinct and
\[
\sigma_0 \upto k , \ldots , \sigma_m \upto k
\]
are distinct as well.  Then $y(\sigma_0 \upto k) = g^{i_0}\neq e$, but $z( \sigma_0 \upto k) = e$, since $\sigma_0 \upto k \not\subset \tau_p$, for each $p \leq n$.  This is a contradiction.

We may also assume that $\sigma_p = \tau_p$, for each $p \leq n$.  Observe now that, if $k$ is as above, we have, for each $p \leq n$, 
\[
g^{i_p} = y(\sigma_p \upto k) = z(\sigma_p \upto k) = g^{j_p}.
\]
Since $g$ has infinite order, we conclude that $i_p = j_p$, for each $p \leq n$.  This shows that $S$ is a free basis for $H$.

Finally, we exhibit $\mathfrak c$-many Borel-measurable homomorphisms $\varphi : H \rightarrow \mathbb Z$.  Since $S$ is a free basis for $H$, it will suffice to define these homomorphisms on $S$ and then extend to $H$.  For each $\alpha \in 2^\omega$, let $\varphi_\alpha \upto S$ be given by
\[
\varphi_\alpha (x_\tau) =\begin{cases}
1 &\mbox{if } \tau = \alpha, \\
0 &\mbox{otherwise}.
\end{cases}
\]
We see that $\{ \varphi_\alpha : \alpha \in 2^\omega\}$ is a family of $\mathfrak c$-many distinct group homomorphisms of $H$ into $\mathbb Z$.  To verify that each $\varphi_\alpha$ is Borel-measurable, observe that, for each $n \in \mathbb Z$,
\begin{align*}
\varphi_\alpha^{-1} [\{ n\}] = \{ x \in H : (\exists k \in \omega)& (\exists s \in \mathbb Z^{k+1}) (\exists \tau_0 , \ldots , \tau_k \in 2^\omega) (\forall i \leq k) (\tau_i \neq \alpha\\
&\& \ x = x_\alpha^n \cdot x_{\tau_0}^{s(0)} \cdot \ldots \cdot x_{\tau_k}^{s(k)}) \}.
\end{align*}
In particular, $\varphi^{-1}[\{ n \}]$ is analytic, for each $n \in \omega$.  On the other hand, every $\mathbf \Sigma^1_1$-measurable map between Borel spaces is Borel-measurable and hence each $\varphi_\alpha$ is Borel-measurable.  We cannot, however, use Pettis' Theorem to conclude that the $\varphi_\alpha$ are continuous, since $H$ is not a Baire group.
\end{proof}



\section{Parameterizing co-analytic proper ideals}\label{S4}

We turn our attention to {\em proper} ideals, that is, ideals $\I$ such that $\I \neq 2^\omega$ and $\I$ contains all finite subsets of $\omega$ (equivalently, $\bigcup \I = \omega$).  Before proceeding, we recall some standard notation.
\begin{enumerate}
\item $\Fin = \{ x \in 2^\omega : (\forall^\infty n) (x(n) = 0)\}$
\item $\Inf = 2^\omega \setminus \Fin$
\item $\Cof = \{ x \in 2^\omega : \omega \setminus x \in \Fin\}$
\item $\Coinf = 2^\omega \setminus \Cof$
\end{enumerate}

Theorem~\ref{T3} is an elaboration of the method of Lemma~\ref{L1} which produces a universal set for proper uncountable co-analytic ideals.

\begin{theorem}\label{T3}
There is a universal set for proper uncountable co-analytic ideals.
\end{theorem}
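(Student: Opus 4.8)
The plan is to adapt the rank-based ``pruning'' construction of Lemma~\ref{L1}, producing a $\mathbf \Pi^1_1$ set $\mathcal A \subseteq 2^\omega \times 2^\omega$ whose cross-sections are exactly the proper uncountable $\mathbf \Pi^1_1$ ideals, and modifying the construction so that each cross-section is forced to be proper and uncountable. Properness is the easy part. Since a proper ideal $\I$ satisfies $\Fin \subseteq \I \subseteq \Coinf$ (a cofinite set in $\I$ would, together with $\Fin \subseteq \I$, force $\omega \in \I$), I will carry out the whole construction inside $2^\omega \times \Coinf$; as $\Coinf$ and $\Fin$ are Borel this does not disturb the complexity computations, and because pruning only deletes points, every cross-section will automatically satisfy $\mathcal A_\sigma \subseteq \Coinf$, hence $\omega \notin \mathcal A_\sigma$. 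The lower bound $\Fin \subseteq \mathcal A_\sigma$ will be supplied by the seed described next.

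The genuinely new difficulty is uncountability, and this is where I expect the main obstacle to lie: a proper ideal is uncountable if and only if it contains an infinite set, but the pruning mechanism can only \emph{delete} sets, so it cannot manufacture the infinite set needed. Forcing one fixed infinite set into every cross-section would destroy completeness, since a given proper uncountable ideal need not contain it. I will resolve this by seeding a \emph{parameter-dependent} infinite set. Using a recursive homeomorphism I read the parameter as a pair $\sigma = (\tau , a)$, let $A(\sigma)$ be the set coded by $a$ when $a \in \Inf \cap \Coinf$ and a fixed infinite coinfinite set otherwise (so $\sigma \mapsto A(\sigma)$ is Borel, always infinite and coinfinite, and surjects onto all such sets), and seed each stage with the Borel ideal $\I_{A(\sigma)} = \{ x : x \setminus A(\sigma) \text{ is finite}\}$ of sets almost contained in $A(\sigma)$. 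Note $\Fin \subseteq \I_{A(\sigma)} \subseteq \Coinf$, so this single seed handles the lower bound for properness as well. The base set is then
\[
\mathcal A^{(0)} = \bigl(\mathcal U \cap (2^\omega \times \Coinf)\bigr) \cup \{(\sigma , x) : x \in \I_{A(\sigma)}\},
\]
which is $\mathbf \Pi^1_1$, and I prune exactly as in Lemma~\ref{L1} to enforce downward- and union-closure inside $\Coinf$, taking $\mathcal A = \bigcap_n \mathcal A^{(n)}$.

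The key point to verify, and the heart of the argument, is that the seeded ideal $\I_{A(\sigma)}$ \emph{survives} the pruning. This is not automatic: an element of $\I_{A(\sigma)}$ could in principle be deleted because its union with some lower-ranked junk set from $\mathcal U_\tau$ leaves $\Coinf$, and hence leaves the previous stage. Merely exempting $\I_{A(\sigma)}$ from the pruning does not work, since an exempted set carries no guarantee when it plays the role of the higher-ranked partner in the union-closure clause. Instead I will arrange the $\mathbf \Pi^1_1$-rank at each stage so that $\I_{A(\sigma)}$ forms a bottom block, strictly below every set not in $\I_{A(\sigma)}$: concretely, I replace a given $\mathbf \Pi^1_1$-rank $\psi_n$ on $\mathcal A^{(n)}$ by the lexicographic rank whose first coordinate is the characteristic function of $\{x : x \notin \I_{A(\sigma)}\}$ and whose second coordinate is $\psi_n$. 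Because $\I_{A(\sigma)}$ is Borel, the boolean combinations defining the associated $\Sigma^1_1$ and $\Pi^1_1$ comparison relations remain in those classes, so this is again a $\mathbf \Pi^1_1$-rank and the definition of $\mathcal A^{(n+1)}$ stays $\mathbf \Pi^1_1$. With this arrangement, any lower-ranked witness to a potential closure failure of an $x \in \I_{A(\sigma)}$ must itself lie in $\I_{A(\sigma)}$, so the relevant union stays in $\I_{A(\sigma)} \subseteq \Coinf$ and $x$ is never deleted; hence $\I_{A(\sigma)} \subseteq \mathcal A_\sigma$.

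Finally I will check soundness and completeness. Soundness follows from the above: each $\mathcal A_\sigma$ is an ideal by the rank argument of Lemma~\ref{L1}, satisfies $\Fin \subseteq \mathcal A_\sigma \subseteq \Coinf$ (hence is proper), and contains the infinite set $A(\sigma)$ (hence is uncountable). For completeness, given a proper uncountable $\mathbf \Pi^1_1$ ideal $\I$, I choose an infinite $A \in \I$ — which is automatically coinfinite since $\I \subseteq \Coinf$ — and a parameter $\sigma = (\tau , A)$ with $\mathcal U_\tau = \I$; then $\I_{A} \subseteq \I$ because $\I$ is a proper ideal containing $A$, so the base cross-section is $(\I \cap \Coinf) \cup \I_A = \I$, which is already an ideal, whence the pruning removes nothing and $\mathcal A_\sigma = \I$.
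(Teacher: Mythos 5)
Your proof is correct, and it follows the same overall strategy as the paper: code into the parameter both a co-analytic code and an infinite co-infinite seed set, seed the base stage with an ideal built from that set, restrict to $\Coinf$, and prune with $\mathbf \Pi^1_1$-ranks exactly as in Lemma~\ref{L1}, so that completeness follows because a genuine proper uncountable ideal $\I$ containing the infinite set $A$ satisfies $\I_A \subseteq \I \subseteq \Coinf$ and is never pruned. Where you genuinely diverge from the paper is in the two mechanisms enforcing seed survival and properness. For survival, the paper does the thing you considered and rejected --- it exempts the seed $\mathcal P(f((\tau)_0))$ from pruning outright --- but it compensates for precisely the danger you identify (an exempted set gives no guarantee when it is the higher-ranked partner) by adding an explicit extra pruning clause, its condition~\ref{it4:3}, requiring every surviving non-seed $x$ to satisfy $x \cup y \in \mathcal V^{(n)}_\tau$ for all $y \subseteq f((\tau)_0)$; you instead make the seed a bottom block of the rank so that the ordinary union-closure clause covers these unions. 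Your variant keeps the pruning conditions identical to Lemma~\ref{L1} at the cost of one verification you assert but should spell out: in the comparison relations for the modified rank, the case decided by the Borel first coordinate alone (seed $x$ versus non-seed $y$) must still certify $x \in \mathcal A^{(n)}$ on the $\mathbf \Sigma^1_1$ side, and this works only because the containment $\I_{A(\sigma)} \subseteq \mathcal A^{(n)}_\sigma$ is itself part of the induction being run; with that remark the comparison relations are Boolean combinations of Borel sets with the original ones, hence the rank property survives. For properness, you bake $\Fin$ into the seed by using $\I_{A(\sigma)} = \{x : x \setminus A(\sigma) \in \Fin\}$ rather than $\mathcal P(A(\sigma))$, whereas the paper seeds with the bare power set and recovers $\Fin$ only at the end by closing $\bigcap_n \mathcal V^{(n)}_\tau$ under finite variants (the ``$u \sdif x$'' step in the definition of $\mathcal V$), which obliges it to check separately that finite-variant closure preserves being an ideal and keeps every member co-infinite. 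Your route is marginally cleaner on this point, eliminating that final closure step; the paper's route keeps the seed simpler. Both are sound, and the completeness and complexity computations coincide.
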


\begin{proof}
For $\tau \in 2^\omega$, let $(\tau)_0 = \{ n : 2n \in \tau\}$ and $(\tau)_1 = \{ n : (2n+1) \in \tau\}$, and let $f : 2^\omega \rightarrow \Inf\setminus\Cof$ be a Borel surjection.  Let $\mathcal U \subseteq 2^\omega \times 2^\omega$ be a universal co-analytic set.  We define co-analytic sets $\mathcal V^{(0)} \supseteq \mathcal V^{(1)} \supseteq \ldots $, by induction, such that each $\mathcal V^{(n)}_\tau$ contains $\mathcal P (f((\tau)_0))$.  Let $\mathcal V^{(0)} \subseteq 2^\omega \times 2^\omega$ be the set of $(\tau,x) \in 2^\omega \times 2^\omega$ such that
\begin{enumerate}
\item $x \subseteq f((\tau)_0)$ or
\item $x \in \mathcal U_{(\tau)_1}$ and $x$ is co-infinite.
\end{enumerate}
Given $\mathcal V^{(n)} \subseteq 2^\omega \times 2^\omega$, let $\varphi_n : \mathcal V^{(n)} \rightarrow \omega_1$ be a $\mathbf \Pi^1_1$-rank.  Define $\mathcal V^{(n+1)}\subseteq 2^\omega \times 2^\omega$ by $(\tau,x) \in \mathcal V^{(n+1)}$ iff either $x \subseteq f((\tau)_0)$ or
\begin{enumerate}
\item\label{it4:1} $x \in \mathcal V^{(n)}_\tau$,
\item\label{it4:2} $(\forall y) (y \subseteq x \implies z \in \mathcal V^{(n)}_\tau)$,
\item\label{it4:3} $(\forall y) (y \subseteq f((\tau)_0) \implies x \cup y \in \mathcal V^{(n)}_\tau)$ and
\item\label{it4:4} $(\forall y) ((y \in \mathcal V^{(n)}_\tau \ \& \ (\tau,y) \leq_{\varphi_n} (\tau,x)) \implies x \cup y \in \mathcal V^{(n)}_\tau)$.
\end{enumerate}
By the properties of $\mathbf \Pi^1_1$-ranks, each $\mathcal V^{(n)}$ is $\mathbf \Pi^1_1$.  Let 
\[
\mathcal V = \{ (\tau,x) : (\exists u \in \Fin) (\forall n) (u \sdif x \in \bigcap_n \mathcal V^{(n)}_\tau)\}
\]
and note that $\mathcal V$ is $\mathbf \Pi^1_1$.

What follows will establish that the cross-sections $\mathcal V_\tau$ are exactly the uncountable co-analytic proper ideals on $\omega$.

First, we verify that each $\mathcal V_\tau$ is an ideal.  Since $\mathcal V_\tau$ is the set of finite variants of members of $\bigcap_n \mathcal V^{(n)}_\tau$, it will suffice to show that $\bigcap_n \mathcal V^{(n)}_\tau$ is an ideal.  

To verify downward closure, fix $y \subseteq x \in \bigcap_n \mathcal V^{(n)}_\tau$.  In the case that $x \subseteq f((\tau)_0)$, we have $y \in \mathcal (P(f((\tau)_0)) \subseteq \mathcal V^{(n)}_\tau$, for each $n$.  Hence $y \in \bigcap_n \mathcal V^{(n)}_\tau$.  In the case that $x \nsubseteq f((\tau)_0)$, part \ref{it4:2} of the definition of $\mathcal V^{(n+1)}$ from $\mathcal V^{(n)}$ implies that $y \in \mathcal V^{(n)}_\tau$, for each $n$.

Suppose that $x,y \in \bigcap_n \mathcal V^{(n)}_\tau$.  We wish to see that $x \cup y \in \bigcap_n \mathcal V^{(n)}_\tau$.  If $x,y \subseteq f((\tau)_0)$, then $x \cup y \in \mathcal P (f((\tau)_0)) \subseteq \bigcap_n \mathcal V^{(n)}_\tau$.  If $x \subseteq f((\tau)_0)$ and $x \nsubseteq f((\tau)_0)$, then part \ref{it4:3} of the definition of $\mathcal V^{(n+1)}$ from $\mathcal V^{(n)}$ implies that $x \cup y \in \mathcal V^{(n)}_\tau$.  We argue similarly if $x \nsubseteq f((\tau)_0)$ and $y \subseteq f((\tau)_0)$.  Finally, if $x,y \nsubseteq f((\tau)_0)$, then, for each $n$, we have $x,y \in V^{(n+1)}_\tau$ and, with no loss of generality, assume that $y \leq_{\varphi_n} x$.  Part \ref{it4:4} of the definition above implies that $x \cup y \in \mathcal V^{(n)}_\tau$.  As this holds for each $n$, it follows that $x \cup y \in \bigcap_n \mathcal V^{(n)}_\tau$.

In order to check that each $\mathcal V_\tau$ is proper, we begin by noting that $\mathcal V_\tau$ is an ideal which contains all finite variants of its members and, in particular, must contain the ideal $\Fin$.  That $\mathcal V_\tau \subsetneq 2^\omega$, for each $\tau$, follows by observing that, for each $x \in \mathcal V_\tau$, there is $y \in \mathcal V^{(0)}_\tau \subseteq \Coinf$ such that $x$ and $y$ are equal, mod finite.  In particular, for each $x \in \mathcal V_\tau$, we have $x \neq \omega$.

Finally, if $\I$ is an uncountable proper ideal on $\omega$, then $\I \nsubseteq \Fin$.  Therefore let $\tau \in 2^\omega$ be such that $\mathcal P (f((\tau)_0)) \subseteq \I$ and $\I = \mathcal U_{(\tau)_1}$.  We have $\mathcal V^{(0)}_\tau = \I$ and, in fact, each $\mathcal V^{(n)}_\tau = V^{(0)}_\tau$, since $\mathcal V^{(0)}_\tau$ is already a proper ideal containing $\mathcal P(f((\tau)_0))$.  This completes the claim and proof.
\end{proof}

As discussed in the Introduction, we have the following corollary.  The proof uses essentially the same argument that yields Theorem~\ref{T2} from Lemma~\ref{L1}.

\begin{refthmT7}
There is a wRK-complete proper uncountable $\mathbf \Pi^1_1$ ideal on $\omega$.\end{refthmT7}

As before, PD yields a corresponding result for the projective classes $\mathbf \Pi^1_{2n+1}$.

By slightly modifying the proof of Theorem~\ref{T1} one can also prove that there is a wRK-complete proper uncountable $\mathbf \Sigma^1_n$ ideal, for each $n \in \omega$.



\section{Projective equivalence relations}\label{S5}

We now give our proof of Hjorth's theorem on co-analytic equivalence relations, as well as our generalization of it under PD.

\begin{refthmT5}[Hjorth, 1996]
There is a universal $\Pi^1_1$ set for $\mathbf \Pi^1_1$ equivalence relations, i.e., there is a (lightface) $\Pi^1_1$ set $\mathcal E \subseteq 2^\omega \times 2^\omega \times 2^\omega$ such that the cross-sections $\mathcal E_\tau$ are exactly the $\mathbf \Pi^1_1$ equivalence relations on $2^\omega$.
\end{refthmT5}

\begin{refthmT4}[PD]
There is a universal set $\Pi^1_{2n+1}$ set for $\mathbf \Pi^1_{2n+1}$ equivalence relations on $2^\omega$.
\end{refthmT4}

\begin{proof}[Proof of Theorems~\ref{T5} and~\ref{T4}]
Let $\mathcal V \subseteq (2^\omega)^4$ and $\mathcal W \subseteq (2^\omega)^5$ be part of a good universal system for $\mathbf \Pi^1_{2n+1}$.  In particular, $\mathcal V$ and $\mathcal W$ are both $\Pi^1_{2n+1}$.  Let $\varphi : \mathcal V \rightarrow \mathsf{ORD}$ be a $\Pi^1_{2n+1}$-rank.  (For $n\geq 1$, the existence of such a $\varphi$ follows from PD.  If $n=0$, then such a $\varphi$ exists under ZFC.)  Define $\mathcal V^* \subseteq (2^\omega)^4$ by $(\alpha , \tau , x , y) \in \mathcal V^*$ iff $x=y$ or
\begin{enumerate}
\item\label{it5:1} $(x,y) , (y,x) \in \mathcal V_{\alpha, \tau}$,
\item\label{it5:2} $(\forall z) (((x,y),(y,z) \in \mathcal V_{\alpha , \tau} \ \& \ (\alpha , \tau , y , z) \leq_{\varphi} (\alpha , \tau , x , y)) \implies (x,z)\in \mathcal V_{\alpha , \tau})$ and
\item\label{it5:3} $(\forall z) (((z,x),(x,y) \in \mathcal V_{\alpha , \tau} \ \& \ (\alpha , \tau , z,x) \leq_{\varphi} (\alpha , \tau , x , y)) \implies (z,y) \in \mathcal V_{\alpha , \tau})$.
\end{enumerate} 
Note that, by the definability properties of $\Pi^1_{2n+1}$ ranks, $\mathcal V^*$ is itself $\Pi^1_{2n+1}$.

We now require a recursive function $f: 2^\omega \rightarrow 2^\omega$ such that $\mathcal V^*_\alpha = \mathcal V_{f(\alpha)}$, for each $\alpha \in 2^\omega$.  To produce such an $f$, let $\varepsilon \in 2^\omega$ be recursive and such that $\mathcal V^* = \mathcal W_\varepsilon$.  Thus, for each $(\alpha,\tau,x,y) \in (2^\omega)^4$,
\[
(\alpha,\tau,x,y) \in \mathcal V^* \iff (\varepsilon,\alpha,\tau,x,y) \in \mathcal W \iff ((s( \varepsilon,\alpha),\tau,x,y) \in V,
\]
where $s : (2^\omega)^2 \rightarrow 2^\omega$ is as in the definition of a good universal system.  Define $f$ by $f(\alpha) = s(\varepsilon, \alpha)$.

Let $\mathcal U \subseteq (2^\omega)^3$ be any universal $\Pi^1_{2n+1}$ set for $\mathbf \Pi^1_{2n+1}$.  The set $\mathcal U \cup (\{ (\tau , x,x) : \tau ,x \in 2^\omega\})$ is also $\Pi^1_{2n+1}$ and hence, by the definition of a good universal system, there is a recursive $\alpha_0 \in 2^\omega$ such that $\mathcal U \cup (\{ (\tau , x,x) : \tau , x \in 2^\omega\}) = \mathcal V_{\alpha_0}$.  Now define $\alpha_n = f^n (\alpha_0)$, i.e., $\alpha_{n+1} = f(\alpha_n)$, for each $n$.  Since $f$ is recursive, the map $n \mapsto \alpha_n$ is as well.  Thus, the set $\mathcal A = \bigcap_n \mathcal V_{\alpha_n}$ is $\Pi^1_{2n+1}$.

In the first place, each $\mathbf \Pi^1_{2n+1}$ equivalence relation on $2^\omega$ appears as a cross-section $\mathcal A_\tau$.  Indeed, if $E$ is a $\mathbf \Pi^1_{2n+1}$ equivalence relation, with $\tau$ such that $\mathcal U_\tau = E$, then each $(x,y) \in \mathcal V_{\alpha_0 , \tau}$ already satisfies conditions \ref{it5:1} - \ref{it5:3} above and hence
\[
\mathcal V_{\alpha_0 , \tau} = \mathcal V^*_{\alpha_0 , \tau} = \mathcal V_{\alpha_1 , \tau} = \mathcal V^*_{\alpha_1 , \tau} = \mathcal V_{\alpha_2 , \tau} = \ldots.
\]
It follows that $\mathcal A_\tau = \mathcal V_{\alpha_0 , \tau} = \mathcal U_\tau$.

The proof will be complete when we have verified that each $\mathcal A_\tau$ is an equivalence relation.  It is a consequence of the choice of $\alpha_0$ and the definition of $\mathcal V^*$ that $(x,x) \in \mathcal A_\tau$, for each $\tau,x \in 2^\omega$.  Also, condition \ref{it5:1} above guarantees that $(x,y) \in \mathcal A_\tau$ iff $(y,x) \in \mathcal A_\tau$.  To verify transitivity, suppose that $(x,y) , (y,z) \in \mathcal A_\tau = \bigcap_n \mathcal V_{\alpha_n} = \bigcap_n \mathcal V^*_{\alpha_n}$.  Fix $n$ and assume that $(\alpha_n , \tau , x,y) \leq_{\varphi} (\alpha_n , \tau , y,z)$.  Since $(y,z) \in \mathcal V^*_{\alpha_n , \tau}$, condition \ref{it5:3} above guarantees that $(x,z) \in \mathcal V_{\alpha_n , \tau}$.  If instead $(\alpha_n , \tau , y,z) \leq_{\varphi} (\alpha_n , \tau , x,y)$, then analogous reasoning, using condition \ref{it5:2}, shows that $(x,z) \in \mathcal V_{\alpha_n , \tau}$.  In either case, since $n$ was arbitrary, it follows that $(x,z) \in \bigcap \mathcal V_{\alpha_n , \tau} = \mathcal A_\tau$.  We conclude each $\mathcal A_\tau$ is an equivalence relation.  This completes the proof.
\end{proof}



\end{document}